\begin{document}

\title{How much faster does the best polynomial approximation converge
than Legendre projection?\thanks{This work was supported by National
Natural Science Foundation of China under grant number 11671160.} }

\titlerunning{How much faster does the best approximation converge than Legendre}

\author{Haiyong Wang}


\institute{Haiyong Wang \at
              School of Mathematics and Statistics, Huazhong
University of Science and Technology, Wuhan 430074, P. R. China. \\
              \email{haiyongwang@hust.edu.cn}            \\
             Hubei Key Laboratory of Engineering Modeling and
Scientific Computing, Huazhong University of Science and Technology,
Wuhan 430074, P. R. China.  
            }

\date{Received: date / Accepted: date}

\maketitle

\begin{abstract}
We compare the convergence behavior of best polynomial
approximations and Legendre and Chebyshev projections and derive
optimal rates of convergence of Legendre projections for analytic
and differentiable functions in the maximum norm. For analytic
functions, we show that the best polynomial approximation of degree
$n$ is better than the Legendre projection of the same degree by a
factor of $n^{1/2}$. For differentiable functions such as piecewise
analytic functions and functions of fractional smoothness, however,
we show that the best approximation is better than the Legendre
projection by only some constant factors. Our results provide some
new insights into the approximation power of Legendre projections.
\keywords{Legendre projection \and best polynomial approximation
\and Chebyshev projection \and optimal rate of convergence \and
analytic functions \and differentiable functions}
 \subclass{41A25 \and 41A10}
\end{abstract}

\section{Introduction}
\label{intro} The Legendre polynomials are one of the most important
sequences of orthogonal polynomials which have been extensively used
in many branches of scientific computing such as Gauss-type
quadrature, special functions, $p$-version of the finite element
method and spectral methods for differential and integral equations
(see, e.g.,
\cite{canuto2006spectral,davis1984methods,eriksson1986error,gui1986fem,hesthaven2007spectral,osipov2013pswf,shen1994legendre,shen2011spectral,szego1975orthogonal}).
Among these applications, Legendre polynomials are particularly
appealing owing to their superior properties: (i) they have
excellent error properties in the approximation of a globally smooth
function; (ii) quadrature rules based on their zeros or extrema are
optimal in the sense of maximizing the exactness of integrated
polynomials; (iii) they are orthogonal with respect to the uniform
weight function $\omega(x)=1$ which makes them preferable in
Galerkin methods for PDEs.

Let $n\geq0$ be an integer and let $P_n(x)$ denote the Legendre
polynomial of degree $n$ which is normalized by $P_n(1)=1$. The
sequence of Legendre polynomials $\{P_n(x)\}$ forms a system of
polynomials orthogonal over $\mathrm{\Omega}=[-1,1]$ and
\begin{align}\label{def:LegendrePoly}
\int_{-1}^{1} P_n(x) P_m(x) \mathrm{d}x = \frac{2}{2n+1}
\delta_{mn},
\end{align}
where $\delta_{mn}$ is the Kronecker delta
\cite[p.~14]{olver2010nist}. Given a real-valued function $f(x)$
which belongs to a Lipschitz class of order larger than $1/2$ on
$\mathrm{\Omega}$, then it has the following uniformly convergent
Legendre series expansion \cite{suetin1964legendre}
\begin{align}\label{eq:LegExp}
f(x) = \sum_{k=0}^{\infty} a_k P_k(x), \quad  a_k = \left(k +
\frac{1}{2} \right)\int_{-1}^{1} f(x) P_k(x) \mathrm{d}x.
\end{align}
Let $\mathcal{P}_n(f)$ denote the truncated Legendre expansion of
degree $n$, i.e.,
\begin{align}\label{eq:LegExp}
\mathcal{P}_n(f) = \sum_{k=0}^{n} a_k P_k(x).
\end{align}
which is also known as the Legendre projection. It is well known
that this polynomial is the best polynomial approximation to $f(x)$
in the $L^2$ norm with respect to the Legendre weight $\omega(x)=1$.
The computation of the first $n+1$ Legendre coefficients
$\{a_k\}_{k=0}^{n}$ has received much attention over the past decade
and fast algorithms have been developed in
\cite{alpert1991legendre,iserles2011legendre} that require only
$O(n\log n)$ operations and in \cite{townsend2018fast} that require
$O(n\log^2n)$ operations.

Besides Legendre polynomials, another widely used sequence of
orthogonal polynomials is the Chebyshev polynomials, i.e., $T_k(x) =
\cos(k \arccos(x))$. Suppose that $f(x)$ is Dini-Lipschitz
continuous on $\mathrm{\Omega}$, then it has the following uniformly
convergent Chebyshev series \cite[Theorem 5.7]{mason2003chebyshev}
\begin{align}\label{eq:LegExp}
f(x) = \sum_{k=0}^{\infty}{'} c_k T_k(x), \quad  c_k = \frac{2}{\pi}
\int_{-1}^{1} \frac{f(x) T_k(x)}{\sqrt{1-x^2}} \mathrm{d}x,
\end{align}
where the prime indicates that the first term of the sum is halved.
Let $\mathcal{T}_n(f)$ denote the truncated Chebyshev expansion of
degree $n$, i.e.,
\begin{align}\label{eq:LegExp}
\mathcal{T}_n(f) = \sum_{k=0}^{n} a_k T_k(x),
\end{align}
which is also known as the Chebyshev projection. It is well known
that $\mathcal{T}_n(f)$ is the best polynomial approximation to
$f(x)$ in the $L^2$ norm with respect to the Chebyshev weight
$\omega(x)=(1-x^2)^{-1/2}$ and the first $n+1$ Chebyshev
coefficients $\{c_k\}_{k=0}^{n}$ can be evaluated efficiently by
making use of the FFT in only $O(n\log n)$ operations (see, e.g.,
\cite[Section~5.2.2]{mason2003chebyshev}).

Let $\mathcal{B}_n(f)$ denote the best approximation polynomial of
degree $n$ to $f$ on $\mathrm{\Omega}=[-1,1]$ in the maximum norm,
i.e.,
\[
\| f - \mathcal{B}_n(f) \|_{\infty} = \min_{p\in\mathrm{\Pi}_n} \| f
- p \|_{\infty},
\]
where $\mathrm{\Pi}_n$ denotes the space of polynomials of degree at
most $n$. If $f$ is continuous on $\mathrm{\Omega}$, it is well
known that $\mathcal{B}_n(f)$ exists and is unique. From the point
of view of polynomial approximation in the maximum norm, it is clear
that $\mathcal{B}_n(f)$ is more accurate than $\mathcal{P}_n(f)$ and
$\mathcal{T}_n(f)$. However, explicit expressions for
$\mathcal{B}_n(f)$ are generally impossible to obtain since the
dependence of $\mathcal{B}_n(f)$ on $f$ is nonlinear and Remez-type
algorithms, which are realized by iterative procedures, have been
developed for computing $\mathcal{B}_n(f)$ (see, e.g.,
\cite[Chapter~10]{trefethen2013atap}). Although algorithms are
available, they are still time-consuming when $n$ is in the
thousands or higher. Obviously, this leads us to face an inevitable
dilemma of whether the increase in accuracy is sufficient to justify
the extra cost of computing $\mathcal{B}_n(f)$.

With these three approaches, a natural question is: How much better
is the accuracy of $\mathcal{B}_n(f)$ than $\mathcal{T}_n(f)$ and
$\mathcal{P}_n(f)$ in the maximum norm? For the case of
$\mathcal{T}_n(f)$ where $f\in C(\mathrm{\Omega})$, it has been
shown in \cite[Theorem~2.2]{rivlin1981approx} that the maximum error
of $\mathcal{T}_n(f)$ is inferior to that of $\mathcal{B}_n(f)$ by
at most a logarithmic factor, i.e.,
\begin{align}
\|f - \mathcal{T}_n(f) \|_{\infty} \leq \left( \frac{4}{\pi^2}\log n
+ 4 \right) \|f - \mathcal{B}_n(f) \|_{\infty}.
\end{align}
For the case of $\mathcal{P}_n(f)$, it has been widely reported that
the maximum error of $\mathcal{P}_n(f)$ is inferior to that of
$\mathcal{B}_n(f)$ by at
most a factor of $n^{1/2}$ (see, e.g., \cite{qu1988lebesgue,wang2012legendre,wang2018legendre,xiang2012error}). We summarize here existing results 
from two perspectives:
\begin{itemize}
\item For $f\in C(\mathrm{\Omega})$, it is well known that
\begin{align}\label{eq:errorL}
\|f - \mathcal{P}_n(f)\|_{\infty} &\leq \left(1 + \Lambda_n \right)
\|f- \mathcal{B}_n(f) \|_{\infty},
\end{align}
where $\Lambda_n = \sup_{f\neq0} \|\mathcal{P}_n(f)\|_{\infty}/\|f
\|_{\infty}$ is the Lebesgue constant of $\mathcal{P}_n(f)$.
Furthermore, Qu and Wong in \cite[Equation~(1.10)]{qu1988lebesgue}
showed that
\begin{align}
\Lambda_n &= \frac{n+1}{2} \int_{-1}^{1} \left| P_n^{(1,0)}(x)
\right| \mathrm{d}x = \frac{2^{3/2}}{\sqrt{\pi}} n^{1/2} + O(1),
\nonumber
\end{align}
where $P_n^{(1,0)}(x)$ is the Jacobi polynomial of degree $n$ with
$\alpha=1$ and $\beta=0$ and $\alpha,\beta$ are the parameters in
Jacobi polynomials. Hence we can conclude that the rate of
convergence of $\mathcal{P}_n(f)$ is slower than that of
$\mathcal{B}_n(f)$ by a factor of $n^{1/2}$.

\item Under the assumption that $f,f{'},\ldots,f^{(m-1)}$ are absolutely continuous,
$f^{(m)}$ is of bounded variation and $\|f^{(m)}\|_{T}<\infty$ where
$m\geq1$ is an integer and $\|\cdot\|_{T}$ denotes some weighted
semi-norm. It has been shown in
\cite{wang2012legendre,wang2018legendre} that the Legendre
coefficients of $f$ satisfy $|a_k|=O(k^{-m-1/2})$. As a
direct consequence we obtain 
\begin{align}\label{eq:sum}
\|f-\mathcal{P}_n(f)\|_{\infty} \leq \sum_{k=n+1}^{\infty} |a_k| =
O(n^{-m+1/2}),
\end{align}
where we have used the inequality $|P_k(x)|\leq1$ (see, e.g.,
\cite[p.~94]{shen2011spectral}). Notice that the rate of convergence
of $\mathcal{B}_n(f)$ for such functions is $O(n^{-m})$ as
$n\rightarrow\infty$ \cite[Chapter~7]{timan1963approximation}.
Again, we see that the rate of convergence of $\mathcal{P}_n(f)$ is
slower than that of $\mathcal{B}_n(f)$ by a factor of $n^{1/2}$.
\end{itemize}
Is the rate of convergence of $\mathcal{P}_n(f)$ really slower than
$\mathcal{B}_n(f)$ by a factor of $n^{1/2}$? Let us consider a
motivating example $f(x)=|x|$, which is absolutely continuous on
$\mathrm{\Omega}$ and its first-order derivative is of bounded
variation. Moreover, it has been shown in
\cite[Equation~(2.11)]{wang2018legendre} that the Legendre
coefficients of $f$ satisfy the following sharp bound
\begin{align}
|a_{k}| \leq \frac{4}{\sqrt{\pi(2k-3)}} \left(k - \frac{1}{2}
\right)^{-1} = O(k^{-3/2}),
\end{align}
where $k\geq2$ is even and $a_k=0$ when $k$ is odd. We now consider
the rate of convergence of $\mathcal{B}_n(f)$, $\mathcal{T}_n(f)$
and $\mathcal{P}_n(f)$. For $\mathcal{B}_n(f)$ and
$\mathcal{T}_n(f)$, it is well know that their rates of convergence
are $O(n^{-1})$ as $n\rightarrow\infty$ (see, e.g.,
\cite[Chapter~7]{trefethen2013atap}). For $\mathcal{P}_n(f)$,
however, from \eqref{eq:errorL} 
and \eqref{eq:sum} we can deduce that the predicted rate of
convergence of $\mathcal{P}_n(f)$ is only $O(n^{-1/2})$.
Unexpectedly, we observed in \cite[Figure~3]{wang2018legendre} that
the rate of convergence of $\mathcal{P}_n(f)$ is actually
$O(n^{-1})$ as $n\rightarrow\infty$, which is the same as that of
$\mathcal{B}_n(f)$ and $\mathcal{T}_n(f)$. This unexpected
observation suggests that existing results on the rate of
convergence of $\mathcal{P}_n(f)$ may be suboptimal.

In this paper, we aim to investigate the optimal rate of convergence
of $\mathcal{P}_n(f)$ in the maximum norm. For analytic functions,
we show that the optimal rate of convergence of $\mathcal{P}_n(f)$
is indeed slower than that of $\mathcal{B}_n(f)$ and
$\mathcal{T}_n(f)$ by a factor of $n^{1/2}$, although all three
approaches converge exponentially fast. For differentiable functions
such as piecewise analytic functions and functions of fractional
smoothness, however, we shall improve existing results in
\eqref{eq:errorL} and \eqref{eq:sum} and show that the optimal rate
of convergence of $\mathcal{P}_n(f)$ is actually the same as that of
$\mathcal{B}_n(f)$ and $\mathcal{T}_n(f)$, i.e., the accuracy of
$\mathcal{P}_n(f)$ is inferior to that of $\mathcal{B}_n(f)$ by only
some constant factors. This result appears to be new and of
interest.

The rest of this paper is organized as follows. In the next section,
we present some experimental observations on the maximum error of
$\mathcal{P}_n(f)$ with $\mathcal{B}_n(f)$ and $ \mathcal{T}_n(f)$.
In section \ref{sec:OptimalAnalytic}, we analyze the convergence
behavior of $\mathcal{P}_n(f)$ for analytic functions. An explicit
error bound for $\mathcal{P}_n(f)$ is established and it is optimal
in the sense that it can not be improved with respect to $n$. In
section \ref{sec:Piecewise} we analyze the convergence behavior of
$\mathcal{P}_n(f)$ for piecewise analytic functions and functions
with derivatives of bounded variation. We extend our discussion to
functions of fractional smoothness in section \ref{sec:extension}
and give some concluding remarks in section \ref{sec:conclusion}.

\section{Experimental observations}\label{sec:Experiment}
In this section, we present some experimental observations on the
comparison of the rate of convergence of $\mathcal{T}_n(f)$,
$\mathcal{P}_n(f)$ and $\mathcal{B}_n(f)$. In order to quantify more
precisely the difference in the rate of convergence, we define the
ratio of the maximum errors of $\mathcal{B}_n(f)$ to
$\mathcal{P}_n(f)$ and $\mathcal{T}_n(f)$ as
\begin{align}
\mathcal{R}_n^P = \frac{\|f - \mathcal{B}_n(f) \|_{\infty}}{\|f -
\mathcal{P}_n(f) \|_{\infty}}, \qquad  \mathcal{R}_n^T = \frac{\|f -
\mathcal{B}_n(f) \|_{\infty}}{\|f - \mathcal{T}_n(f) \|_{\infty}}.
\end{align}
In our computations, the maximum error of $\mathcal{B}_n(f)$ is
calculated using the Remez algorithm in Chebfun
\cite{driscoll2014chebfun} and the maximum errors of
$\mathcal{P}_n(f)$ and $\mathcal{T}_n(f)$ are calculated by using a
finer grid in $\mathrm{\Omega}=[-1,1]$.

In Figure \ref{fig:ExamI} we show the maximum error of three
approximations as a function of $n$ for the three analytic functions
$f(x)=\exp(x^5),\ln(1.2+x),(1+4x^2)^{-1}$ and $\mathcal{R}_n^P$
scaled by $n^{1/2}$ and $\mathcal{R}_n^T$. From the top row of
Figure \ref{fig:ExamI}, we see that the rate of convergence of
$\mathcal{B}_n(f)$ is almost indistinguishable with that of
$\mathcal{T}_n(f)$. Moreover, both rates of convergence of
$\mathcal{B}_n(f)$ and $\mathcal{T}_n(f)$ are better than that of
$\mathcal{P}_n(f)$. From the bottom row of Figure \ref{fig:ExamI},
we see that each ratio $\mathcal{R}_n^P$ scaled by $n^{1/2}$
approaches a finite asymptote as $n$ grows, which implies that the
rate of convergence of $\mathcal{B}_n(f)$ is faster than that of
$\mathcal{P}_n(f)$ by a factor of $n^{1/2}$. On the other hand, each
ratio $\mathcal{R}_n^T$ approaches a finite asymptote as $n$ grows
($0.6 \leq \mathcal{R}_n^T \leq 0.7$), which implies that
$\mathcal{B}_n(f)$ is better than $\mathcal{T}_n(f)$ by only some
constant factors.

\begin{figure}[ht]
\centering
\includegraphics[trim = 36mm 0mm 0mm 0mm,width=13cm,height=9cm]{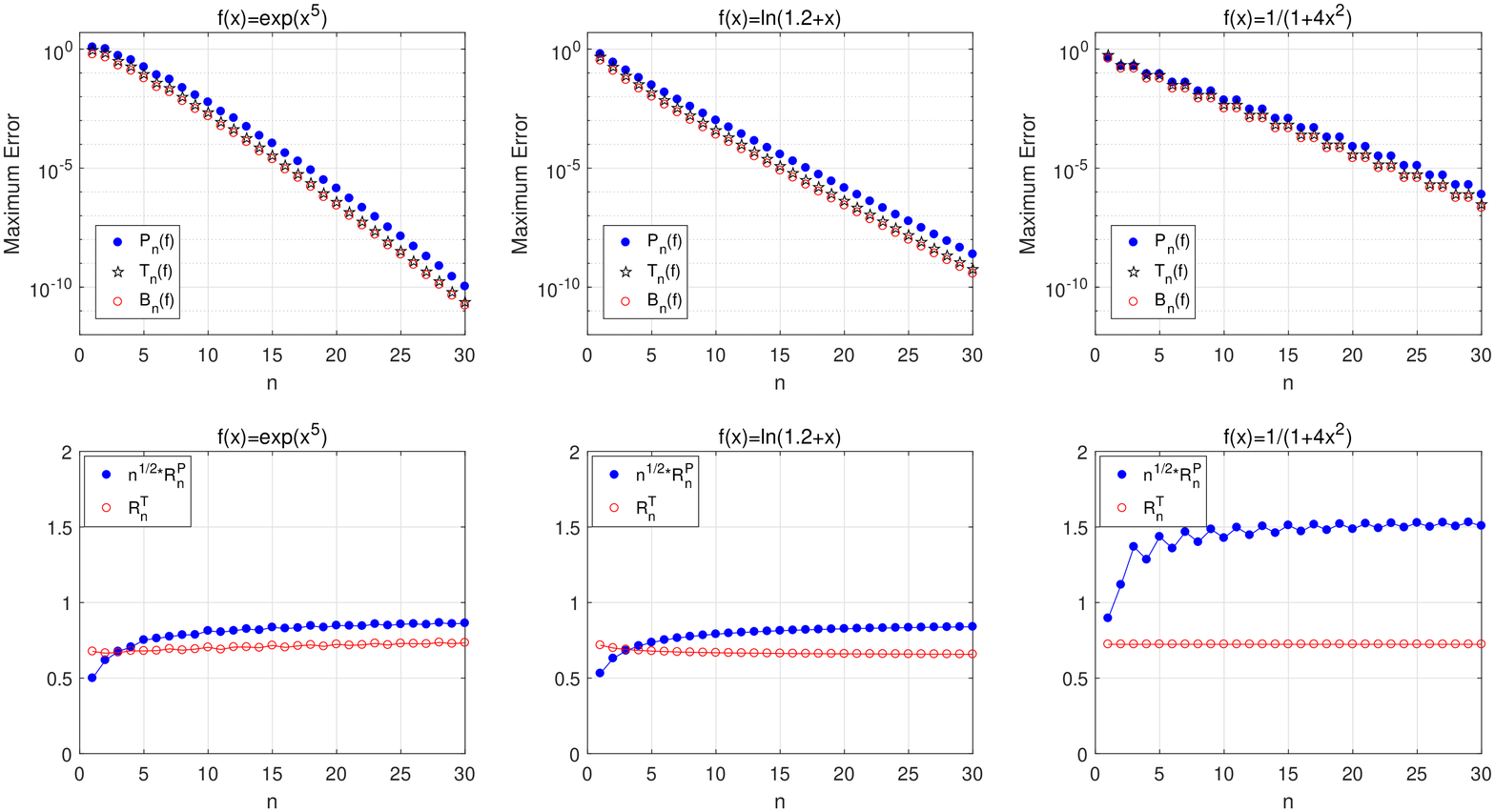}
\caption{Top row shows the log plot of the maximum error of
$\mathcal{B}_n(f)$, $\mathcal{T}_n(f)$ and $\mathcal{P}_n(f)$ for
$f(x)=\exp(x^5)$ (left), $f(x)=\ln(1.2+x)$ (middle) and
$f(x)=1/(1+4x^2)$ (right). Bottom row shows $n^{1/2}
\mathcal{R}_n^P$ and $\mathcal{R}_n^T$. Here $n$ ranges from 1 to
30. } \label{fig:ExamI}
\end{figure}

In Figure \ref{fig:ExamII} we show the maximum error of three
approximations as a function of $n$ for the three differentiable
functions $f(x)=\exp(-1/x^2),(x-\frac{1}{2})_{+}^3,|\sin(5x)|$ and
the corresponding ratios $\mathcal{R}_n^P$ and $\mathcal{R}_n^T$.
For the first test function, it is infinitely differentiable on
$\mathrm{\Omega}$. For the second test function, it is a spline
function whose definition is given in \eqref{eq:spline}. Moreover,
$f\in C^2(\mathrm{\Omega})$ and $f{'''}$ is of bounded variation on
$\mathrm{\Omega}$. For the last function, it is absolutely
continuous and $f{'}$ is of bounded variation on $\mathrm{\Omega}$.
From the top row of Figure \ref{fig:ExamII} we observe that all
three methods $\mathcal{B}_n(f)$, $\mathcal{T}_n(f)$ and
$\mathcal{P}_n(f)$ converge at the same rate. From the bottom row of
Figure \ref{fig:ExamII} we see that each ratio $\mathcal{R}_n^P$ and
$\mathcal{R}_n^T$ oscillates around or converges to a finite
asymptote as $n\rightarrow\infty$, which implies that
$\mathcal{B}_n(f)$ is better than $\mathcal{T}_n(f)$ and
$\mathcal{P}_n(f)$ by only some constant factors (for the last two
functions, note that $\mathcal{R}_n^P$ and $\mathcal{R}_n^T$
approach about $1/2$ as $n\rightarrow\infty$, and thus
$\mathcal{B}_n(f)$ is better than $\mathcal{T}_n(f)$ and
$\mathcal{P}_n(f)$ by a factor of 2).

\begin{figure}[ht]
\centering
\includegraphics[trim = 36mm 0mm 0mm 0mm,width=13cm,height=9cm]{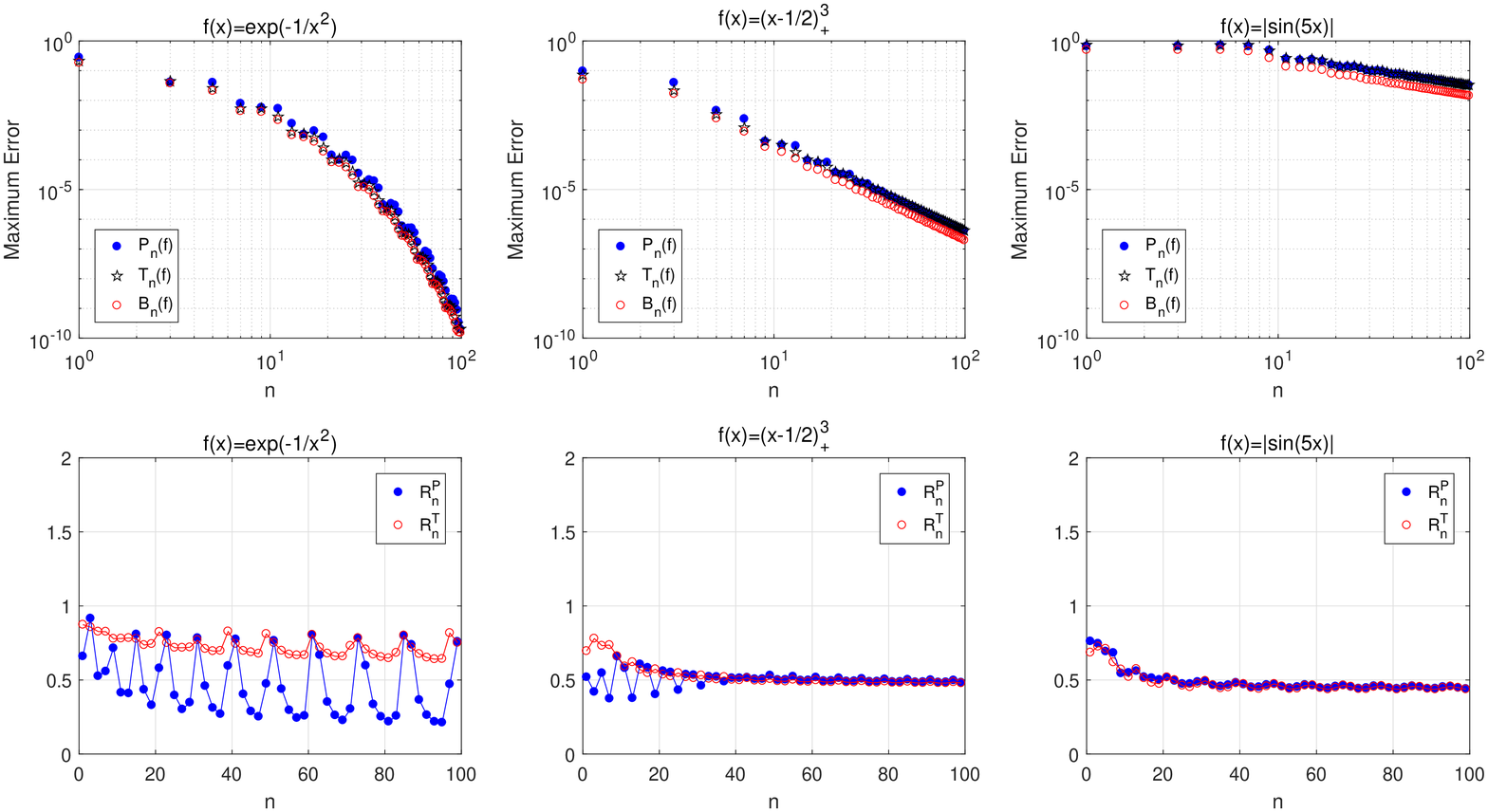}
\caption{Top row shows the log-log plot of the maximum error of
$\mathcal{B}_n(f)$, $\mathcal{T}_n(f)$ and $\mathcal{P}_n(f)$ for
$f(x)=\exp(-1/x^2)$ (left), $f(x)=(x-\frac{1}{2})_{+}^3$ (middle)
and $f(x)=|\sin(5x)|$ (right). Bottom row shows the corresponding
$\mathcal{R}_n^P$ and $\mathcal{R}_n^T$. Here $n$ ranges from 1 to
100. } \label{fig:ExamII}
\end{figure}

In summary, the above observations suggest the following
conclusions:
\begin{itemize}
\item For analytic functions, the rate of convergence of
$\mathcal{B}_n(f)$ is better than that of $\mathcal{T}_n(f)$ by some
constant factors and is better than that of $\mathcal{P}_n(f)$ by a
factor of $n^{1/2}$;

\item For differentiable functions, however, the rate of convergence of
$\mathcal{B}_n(f)$ is better than that of $\mathcal{T}_n(f)$ and
$\mathcal{P}_n(f)$ by only some constant factors.
\end{itemize}
How to explain these observations? Regarding the convergence
behavior of $\mathcal{T}_n(f)$, sharp bounds for its maximum error
have received much attention in recent years. We collect the results
in the following.
\begin{theorem}\label{thm:ChebyRate}
If $f$ is analytic with $|f(z)|\leq M$ in the region bounded by the
ellipse with foci $\pm1$ and major and minor semiaxis lengths
summing to $\rho>1$, then for each $n\geq0$,
\begin{align}\label{eq:ChebAnal}
\|f - \mathcal{T}_n(f)  \|_{\infty} \leq \frac{2M}{\rho^n(\rho-1)}.
\end{align}
If $f,f{'},\ldots,f^{(m-1)}$ are absolutely continuous on
$\mathrm{\Omega}=[-1,1]$ and $f^{(m)}$ is of bounded variation
$V_{m}$ for some integer $m\geq1$, then for each $n\geq m+1$,
\begin{align}\label{eq:ChebDiff}
\|f - \mathcal{T}_n(f)  \|_{\infty} \leq \frac{2V_{m}}{\pi m
(n-m)^{m}}.
\end{align}
\end{theorem}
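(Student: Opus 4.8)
The plan is to reduce both inequalities to bounds on the Chebyshev coefficients $c_k$ of $f$; once these are in hand, the estimates follow by summing the tail of the expansion together with the elementary bound $|T_k(x)|\le1$ for $x\in\mathrm{\Omega}$. Since the Chebyshev series of $f$ converges uniformly, we have
\begin{align*}
\|f-\mathcal{T}_n(f)\|_{\infty}=\Bigl\|\sum_{k=n+1}^{\infty}c_kT_k\Bigr\|_{\infty}\le\sum_{k=n+1}^{\infty}|c_k|,
\end{align*}
so it remains to control $|c_k|$ in each of the two regularity settings.

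For the analytic case, I would pass to the variable $x=\cos\theta$ and then set $u=e^{i\theta}$, i.e.\ use the Joukowski map $x=\tfrac12(u+u^{-1})$, which recasts the coefficient as the contour integral
\begin{align*}
c_k=\frac{1}{\pi i}\oint_{|u|=1}f\Bigl(\frac{u+u^{-1}}{2}\Bigr)\,u^{-k-1}\,\mathrm{d}u.
\end{align*}
Since $x=\tfrac12(u+u^{-1})$ maps the circle $|u|=r$ onto the Bernstein ellipse $E_r$ and $f$ is analytic in the region bounded by $E_\rho$, the integrand $f(\tfrac12(u+u^{-1}))u^{-k-1}$ is analytic in the annulus $1\le|u|\le r$ for every $1<r<\rho$, so Cauchy's theorem lets me inflate the contour from $|u|=1$ to $|u|=r$. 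Bounding the integrand there by $Mr^{-k-1}$ gives $|c_k|\le2Mr^{-k}$, and letting $r\uparrow\rho$ yields $|c_k|\le2M\rho^{-k}$. Summing the geometric series then gives $\sum_{k=n+1}^{\infty}|c_k|\le2M\sum_{k=n+1}^{\infty}\rho^{-k}=2M/(\rho^{n}(\rho-1))$, which is \eqref{eq:ChebAnal}.

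For the differentiable case, the crucial ingredient is the coefficient bound
\begin{align*}
|c_k|\le\frac{2V_{m}}{\pi\,(k-m)(k-m+1)\cdots k},\qquad k\ge m+1.
\end{align*}
I would obtain it from $c_k=\tfrac{2}{\pi}\int_0^{\pi}f(\cos\theta)\cos(k\theta)\,\mathrm{d}\theta$ by integrating by parts repeatedly: $m$ integrations by parts reduce the estimate to the Chebyshev coefficients of $f^{(m)}$ (at each stage the boundary terms at $\theta=0,\pi$ vanish because the accompanying sine factors do), and a final integration by parts against $\mathrm{d}f^{(m)}$, read as a Riemann--Stieltjes integral, contributes the total variation $V_m$ and the remaining factor in the denominator. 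Equivalently, one may iterate the identity $2kc_k(f)=c_{k-1}(f')-c_{k+1}(f')$ linking the coefficients of $f$ and $f'$, together with the elementary bound $|c_j(g)|\le2V/(\pi j)$ valid for any $g$ of bounded variation $V$. Granting the coefficient bound, I would conclude with the telescoping evaluation
\begin{align*}
\sum_{k=n+1}^{\infty}\frac{1}{(k-m)(k-m+1)\cdots k}=\frac{1}{m\,(n+1-m)(n+2-m)\cdots n}\le\frac{1}{m\,(n-m)^{m}},
\end{align*}
where the last step bounds each of the $m$ factors of the product from below by $n-m$ and is valid precisely because $n\ge m+1$; multiplying by $2V_m/\pi$ yields \eqref{eq:ChebDiff}.

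The change of variables and the geometric and telescoping sums are routine, so the main obstacle is the sharp coefficient bound in the differentiable case. The delicate points there are to verify that all the intermediate boundary terms in the repeated integration by parts really cancel, and to keep track of the numerical constants carefully enough that the last step against the bounded-variation function $f^{(m)}$ contributes exactly $V_m$ with no spurious factor --- it is this precision that makes the exponent of $(n-m)$ and the dependence on $m$ and $V_m$ come out correctly.
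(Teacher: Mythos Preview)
Your proposal is correct and follows exactly the approach of Trefethen's \emph{Approximation Theory and Approximation Practice}, Chapters~7 and~8, which is precisely what the paper cites in lieu of its own proof; the contour-deformation argument for the analytic case and the recurrence $2k\,c_k(f)=c_{k-1}(f')-c_{k+1}(f')$ combined with the telescoping sum for the differentiable case are the standard ingredients there. One small caution: your first description of the differentiable case (``$m$ integrations by parts reduce the estimate to the Chebyshev coefficients of $f^{(m)}$'') is a bit loose, since the intermediate expressions are combinations of coefficients at neighbouring indices rather than a single coefficient; the clean product $(k-m)(k-m+1)\cdots k$ really comes from inducting on the recurrence you state and using that $(k+1)k\cdots(k-m+2)\ge(k-1)(k-2)\cdots(k-m)$ at each step, so when you write it up you should follow your ``equivalently'' route rather than the first one.
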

\begin{proof}
We refer to \cite[Chapter 8]{trefethen2013atap} for the proof of
\eqref{eq:ChebAnal} and \cite[Chapter 7]{trefethen2013atap} for the
proof of \eqref{eq:ChebDiff}.
\end{proof}

A few remarks on Theorem \ref{thm:ChebyRate} are in order.
\begin{remark}
Notice that these functions
$f(x)=\exp(-1/x^2),(x-\frac{1}{2})_{+}^3,|\sin(5x)|$ correspond to
$m=\infty$, $m=3$ and $m=1$, respectively. As a consequence, we can
deduce from \eqref{eq:ChebDiff} that the rates of convergence of
$\mathcal{T}_n(f)$ are $O(n^{-k})$ for any $k\in \mathbb{N}$,
$O(n^{-3})$ and $O(n^{-1})$, respectively. On the other hand, we can
deduce from \cite[Chapter~7]{timan1963approximation} that the rates
of convergence of $\mathcal{B}_n(f)$ for these three functions are
also $O(n^{-k})$ for any $k\in \mathbb{N}$, $O(n^{-3})$ and
$O(n^{-1})$, respectively. Clearly, the rates of convergence of
$\mathcal{T}_n(f)$ and $\mathcal{B}_n(f)$ are of the same order,
which explain the convergence behavior of $\mathcal{T}_n(f)$
observed in Figure \ref{fig:ExamII}. For discussions on the
comparison of $\mathcal{B}_n(f)$ and $\mathcal{T}_n(f)$ when $f$ is
a polynomial of degree larger than $n$, we refer to
\cite{clenshaw1964best}.
\end{remark}

\begin{remark}\label{rk:ChebFrac}
For differentiable functions, the bound \eqref{eq:ChebDiff} is only
optimal for functions with interior singularities of integer-order.
For functions of fractional smoothness, optimal error estimates of
$\mathcal{T}_n(f)$ was recently analyzed in \cite{liu2019optimal} by
introducing fractional Sobolev-type spaces and using the fractional
calculus properties of Gegenbauer functions of fractional degree. We
refer the interested reader to \cite{liu2019optimal} for more
details.
\end{remark}

In the following sections, we shall focus on the convergence
behavior of the Legendre projection $\mathcal{P}_n(f)$ for analytic
and several typical kinds of differentiable functions and present
some theoretical results concerning its optimal rate of convergence.

\section{Optimal rate of convergence of $\mathcal{P}_n(f)$ for analytic functions}
\label{sec:OptimalAnalytic} In this section we study the optimal
rate of convergence of $\mathcal{P}_n(f)$ for analytic functions.
Let $\mathcal{E}_{\rho}$ denote the Bernstein ellipse
\begin{equation}\label{def:Bern}
\mathcal{E}_{\rho} = \left\{ z \in \mathbb{C} ~\bigg|~ z = \frac{u +
u^{-1}}{2},~~ |u| = \rho\geq1 \right\},
\end{equation}
which has the foci at $\pm 1$ and the major and minor semi-axes are
given by $(\rho+\rho^{-1})/2$ and $(\rho-\rho^{-1})/2$,
respectively.

Our starting point is the contour integral expression of the
Legendre coefficients.
\begin{lemma}\label{lem:LegContour}
Suppose that $f$ is analytic in the region bounded by the ellipse
$\mathcal{E}_{\rho}$ for some $\rho>1$, then for each $k\geq0$,
\begin{align}\label{eq:LegContour}
a_k & = \frac{\Gamma(k+1) \Gamma(\frac{1}{2})}{\Gamma(k+\frac{1}{2})
i \pi} \oint_{\mathcal{E}_{\rho}} \frac{f(z)}{ (z \pm \sqrt{z^2 -
1})^{k+1} } {}_2\mathrm{ F}_1\left[\begin{matrix}   k &+ 1,
\frac{1}{2}
\\   k + \frac{3}{2} \hspace{-1cm} &\end{matrix} ; \frac{1}{(z\pm\sqrt{z^2 - 1}
)^{2}} \right]  \mathrm{d}z,
\end{align}
where the sign in $z \pm \sqrt{z^2 - 1}$ is chosen so that
$|z\pm\sqrt{z^2 - 1}|>1$ and $\Gamma(z)$ is the gamma function. Here
${}_2 \mathrm{F}_1(\cdot)$ is the Gauss hypergeometric function
defined by
\[
{}_2\mathrm{ F}_1\left[\begin{matrix} &  a ,  b
\\ c \hspace{-.8cm} &\end{matrix} ; z \right] = \sum_{k=0}^{\infty} \frac{(a)_k
(b)_k}{(c)_k} \frac{z^k}{k!},
\]
where $|z|<1$ and $c\neq0,-1,-2,\ldots$, and $(z)_k$ is the
Pochhammer symbol defined by $(z)_0=1$ and $(z)_{k+1}=(z)_{k}(z+k)$
for $k\geq0$.
\end{lemma}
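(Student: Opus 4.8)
The plan is to turn the real integral defining $a_k$ into a complex contour integral built from the Legendre function of the second kind $Q_k$, and then substitute an explicit hypergeometric form of $Q_k$. Recall that $Q_k$ is analytic on $\mathbb{C}\setminus[-1,1]$ and obeys Neumann's formula $Q_k(z)=\frac12\int_{-1}^{1}\frac{P_k(t)}{z-t}\,\mathrm{d}t$ for $z\notin[-1,1]$. First I would fix $\rho'$ with $1<\rho'<\rho$ so that $f$ is analytic on and inside $\mathcal{E}_{\rho'}$, and note that $[-1,1]$ lies in the interior of $\mathcal{E}_{\rho'}$ while the contour itself avoids $[-1,1]$. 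Inserting Neumann's formula into $\frac{1}{\pi i}\oint_{\mathcal{E}_{\rho'}}f(z)Q_k(z)\,\mathrm{d}z$ and using that $\frac{f(z)P_k(t)}{z-t}$ is continuous on the product of the two disjoint compacta $\mathcal{E}_{\rho'}$ and $[-1,1]$, Fubini's theorem permits interchanging the two integrations; the inner one is $\frac{1}{2\pi i}\oint_{\mathcal{E}_{\rho'}}\frac{f(z)}{z-t}\,\mathrm{d}z=f(t)$ by Cauchy's integral formula. This gives $\int_{-1}^{1}f(t)P_k(t)\,\mathrm{d}t=\frac{1}{\pi i}\oint_{\mathcal{E}_{\rho'}}f(z)Q_k(z)\,\mathrm{d}z$, hence $a_k=\frac{2k+1}{2\pi i}\oint_{\mathcal{E}_{\rho'}}f(z)Q_k(z)\,\mathrm{d}z$; since the integrand is analytic in the annulus between any two such ellipses, the value is independent of $\rho'$.

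The remaining ingredient is the explicit form of $Q_k$. Writing $w=z\pm\sqrt{z^2-1}$ with the branch chosen so that $|w|>1$ (equivalently $z=(w+w^{-1})/2$, with $w$ mapping $\mathcal{E}_\rho$ onto the circle $|u|=\rho$), the Laplace-type representation $Q_k(z)=\int_0^\infty\big(z+\sqrt{z^2-1}\cosh t\big)^{-k-1}\,\mathrm{d}t$ becomes, after the substitutions $s=\cosh t$ and then $u=(s-1)/(s+1)$, the Euler integral $w^{-k-1}\int_0^1 u^{-1/2}(1-u)^k(1-w^{-2}u)^{-k-1}\,\mathrm{d}u$. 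Since $|w^{-2}|<1$, Euler's integral representation of the hypergeometric function identifies this with $\frac{\Gamma(\frac12)\Gamma(k+1)}{\Gamma(k+\frac32)}\,w^{-k-1}\,{}_2\mathrm{F}_1\!\left(k+1,\tfrac12;k+\tfrac32;w^{-2}\right)$; alternatively this identity is a standard handbook formula and could be quoted directly. Substituting into the contour-integral expression for $a_k$ and using $\frac{2k+1}{2}=\Gamma(k+\tfrac32)/\Gamma(k+\tfrac12)$ to collapse the Gamma factors yields exactly \eqref{eq:LegContour}.

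The Fubini and Cauchy steps are painless precisely because the contour stays bounded away from $[-1,1]$; the only delicate part is the representation of $Q_k$ — fixing one branch of $\sqrt{z^2-1}$ uniformly on $\mathcal{E}_\rho$, verifying $|w^{-2}|<1$ so that the hypergeometric series converges on the whole contour, and keeping the Gamma-function bookkeeping straight so that the constant comes out as $\Gamma(k+1)\Gamma(\tfrac12)/\Gamma(k+\tfrac12)$. That is where I would concentrate the verification.
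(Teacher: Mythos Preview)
Your argument is correct and takes a genuinely different route from the paper. The paper does not prove the lemma directly but cites two approaches: Iserles's original derivation, which writes $a_k$ as a series in $\{f^{(j)}(0)\}$, reinterprets this as an integral transform with a hypergeometric kernel, and then applies a hypergeometric transformation to obtain a rapidly convergent kernel; and Wang's later method, which obtains the same formula by rearranging the Chebyshev coefficients of the second kind of $f$. Your path --- Neumann's formula to get $a_k=\frac{2k+1}{2\pi i}\oint f(z)Q_k(z)\,\mathrm{d}z$, followed by the Laplace--Euler integral representation of $Q_k$ --- is a third, classical route. It is entirely self-contained and makes transparent that the hypergeometric kernel in \eqref{eq:LegContour} is nothing other than the Legendre function of the second kind; the cited approaches, by contrast, arrive at the kernel more algebraically and tie it to Taylor or Chebyshev expansions, which is relevant in the fast-algorithm context where the identity first arose. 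One small remark: you integrate over $\mathcal{E}_{\rho'}$ with $\rho'<\rho$ and invoke independence of $\rho'$, whereas the lemma as stated writes $\mathcal{E}_\rho$; this is harmless once $f$ extends continuously to the closed ellipse, which is how the bound is applied downstream.
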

\begin{proof}
This contour integral was first derived by Iserles in
\cite{iserles2011legendre} for the purpose of designing some fast
algorithms for computing $\{a_k\}_{k=0}^{n}$. The idea of his
derivation is based on writing $a_k$ as a linear combination of
$\{f^{(j)}(0)\}$ and then as an integral transform with a Gauss
hypergeometric function as its kernel. After that, a hypergeometric
transformation was used to replace the original kernel by a new one
that converges rapidly, which finally leads to
\eqref{eq:LegContour}. More recently, a new and simpler approach for
the derivation of \eqref{eq:LegContour} was proposed in
\cite{wang2016gegenbauer} and the idea is simply to rearrange the
Chebyshev coefficients of the second kind. We refer the interested
reader to \cite{iserles2011legendre,wang2016gegenbauer} for more
details.
\end{proof}

In the following, we state some new upper bounds for the Legendre
coefficients, which are simpler but slightly less sharp than the
result stated in \cite{wang2016gegenbauer}. As will be shown later,
these new bounds allow us to establish a new and explicit error
bound for the Legendre projection $\mathcal{P}_n(f)$.
\begin{lemma}\label{lem:LegBound}
Suppose that $f$ is analytic in the region bounded by the ellipse
$\mathcal{E}_{\rho}$ for some $\rho>1$, then for each $k\geq0$,
\begin{align}\label{eq:LegBound}
|a_0| \leq \frac{D(\rho)}{2}, \qquad  |a_k| &\leq D(\rho)
\frac{k^{1/2}}{\rho^{k}}, \quad k\geq1,
\end{align}
where $D(\rho)$ is defined by
\begin{align}\label{eq:D}
D(\rho) &= \frac{\displaystyle 2 L(\mathcal{E}_{\rho})}{\pi
\sqrt{\rho^2-1}} \max_{z\in\mathcal{E}_{\rho}}|f(z)|.
\end{align}
Here $L(\mathcal{E}_{\rho})$ denotes the length of the circumference
of $\mathcal{E}_{\rho}$.
\end{lemma}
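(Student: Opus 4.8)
The plan is to bound the contour integral in \eqref{eq:LegContour} of Lemma~\ref{lem:LegContour} directly, estimating its three ingredients --- the Gamma prefactor, the power $(z\pm\sqrt{z^2-1})^{-(k+1)}$, and the Gauss hypergeometric factor --- separately, and then applying the elementary $ML$-inequality for contour integrals along $\mathcal{E}_{\rho}$. Parametrizing the ellipse by $z=(u+u^{-1})/2$ with $|u|=\rho$, and choosing the branch of $z\pm\sqrt{z^2-1}$ as in the lemma, one has $|z\pm\sqrt{z^2-1}|=|u|=\rho$ for every $z\in\mathcal{E}_{\rho}$; hence $|(z\pm\sqrt{z^2-1})^{-(k+1)}|=\rho^{-(k+1)}$ and the argument $w=(z\pm\sqrt{z^2-1})^{-2}$ of the hypergeometric function has $|w|=\rho^{-2}<1$ uniformly on the contour, so all estimates below are uniform in $z$.

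The first main step is to bound the hypergeometric factor. Because $(k+1)_j/(k+\tfrac32)_j=\prod_{i=0}^{j-1}(k+1+i)/(k+\tfrac32+i)<1$ for each $j\geq1$, a termwise comparison against the binomial series gives
\[
\left|\,{}_2\mathrm{F}_1\!\left[\begin{matrix} k+1,\ \tfrac12 \\ k+\tfrac32 \end{matrix};w\right]\right|\;\leq\;\sum_{j=0}^{\infty}\frac{(1/2)_j}{j!}\,|w|^{j}\;=\;(1-|w|)^{-1/2}\;=\;\frac{\rho}{\sqrt{\rho^2-1}}.
\]
This is the estimate that shapes the final bound: the factor $\rho$ it contributes cancels one power of $\rho$ from $\rho^{-(k+1)}$, leaving the claimed $\rho^{-k}$, while the remaining factor $1/\sqrt{\rho^2-1}$ combines with $L(\mathcal{E}_{\rho})$ and $\max_{z\in\mathcal{E}_{\rho}}|f(z)|$ to rebuild $D(\rho)$ up to the constant $\pi/2$.

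For the Gamma prefactor, using $\Gamma(k+\tfrac12)=\frac{(2k)!}{4^{k}k!}\sqrt{\pi}$ and $\Gamma(\tfrac12)=\sqrt{\pi}$ shows that the modulus of $\Gamma(k+1)\Gamma(\tfrac12)/(\Gamma(k+\tfrac12)\pi)$ equals $4^{k}/(\pi\binom{2k}{k})$. Multiplying the three bounds and the length of $\mathcal{E}_{\rho}$ then yields, for all $k\geq0$,
\[
|a_k|\;\leq\;\frac{4^{k}}{\pi\binom{2k}{k}}\cdot L(\mathcal{E}_{\rho})\max_{z\in\mathcal{E}_{\rho}}|f(z)|\cdot\rho^{-(k+1)}\cdot\frac{\rho}{\sqrt{\rho^2-1}}\;=\;\frac{4^{k}}{2\binom{2k}{k}}\,D(\rho)\,\rho^{-k}.
\]
For $k=0$ this is exactly $|a_0|\leq D(\rho)/2$. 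For $k\geq1$, I would finish by invoking the elementary central-binomial inequality $\binom{2k}{k}\geq 4^{k}/(2\sqrt{k})$ --- equivalently the sharp Gamma-ratio bound $\Gamma(k+1)/\Gamma(k+\tfrac12)\leq 2\sqrt{k}/\sqrt{\pi}$ --- which gives $4^{k}/(2\binom{2k}{k})\leq\sqrt{k}$ and hence $|a_k|\leq D(\rho)\,k^{1/2}\rho^{-k}$.

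The parametrization of $\mathcal{E}_{\rho}$ and the $ML$-estimate are routine, being immediate from the constancy of $|u|=\rho$ on the contour, and the termwise majorization of the hypergeometric series is equally straightforward. The only genuinely delicate point is the last step: to land on $k^{1/2}$ rather than a weaker factor such as $(k+1)^{1/2}$, the central-binomial inequality must be used in its tight form (with equality at $k=1$), since a crude Stirling-type estimate would forfeit the optimal constant; proving that inequality by induction, or citing a sharp Kershaw/Wendel-type bound for $\Gamma(k+1)/\Gamma(k+\tfrac12)$, is where the real work lies.
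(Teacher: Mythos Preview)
Your proposal is correct and follows essentially the same route as the paper: apply the $ML$-estimate to the contour integral of Lemma~\ref{lem:LegContour}, bound the hypergeometric factor by the binomial series $(1-\rho^{-2})^{-1/2}$ via the termwise inequality $(k+1)_j/(k+\tfrac32)_j\le 1$, and then control the Gamma prefactor by the sharp inequality $\Gamma(k+1)\Gamma(\tfrac12)/\Gamma(k+\tfrac12)\le 2\sqrt{k}$ (your central-binomial form $\binom{2k}{k}\ge 4^k/(2\sqrt{k})$ is the same statement). The only cosmetic difference is in this last step: rather than induction or a Kershaw/Wendel citation, the paper simply observes that the sequence $\psi(k)=\Gamma(k+1)\Gamma(\tfrac12)/(\Gamma(k+\tfrac12)\sqrt{k})$ is strictly decreasing, so $\psi(k)\le\psi(1)=2$ --- a one-line verification that makes the ``delicate'' step routine.
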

\begin{proof}
From Lemma \ref{lem:LegContour}, we immediately obtain
\begin{align}\label{eq:LegBoundS1}
|a_k| &\leq
\frac{\Gamma(k+1)\Gamma(\frac{1}{2})}{\Gamma(k+\frac{1}{2}) \pi}
{}_2\mathrm{ F}_1\left[\begin{matrix} k & + 1, \frac{1}{2}
\\   k + \frac{3}{2}  \hspace{-1cm} &\end{matrix} ; ~ \frac{1}{ \rho^{2}}
\right] \frac{L(\mathcal{E}_{\rho})}{\rho^{k+1}}
\max_{z\in\mathcal{E}_{\rho}}|f(z)|.
\end{align}
Furthermore, for each $k\geq0$ and $\rho>1$, we have
\begin{align}\label{eq:LegBoundS2}
 {}_2\mathrm{ F}_1\left[\begin{matrix} k & + 1, \frac{1}{2}
\\   k + \frac{3}{2} \hspace{-1cm} & \end{matrix};  ~ \frac{1}{\rho^{2}}
\right] \leq {}_2\mathrm{ F}_1\left[\begin{matrix} k & +
\frac{3}{2}, \frac{1}{2}
\\   k + \frac{3}{2} \hspace{-1cm} & \end{matrix};  ~ \frac{1}{\rho^{2}}
\right] &= {}_1\mathrm{ F}_0\left[\begin{matrix} \frac{1}{2}
\\ \hspace{-1cm} & \end{matrix};  ~ \frac{1}{\rho^{2}}
\right] \nonumber \\
&= \left(1 - \frac{1}{\rho^2} \right)^{-1/2}.
\end{align}
Combining \eqref{eq:LegBoundS1} and \eqref{eq:LegBoundS2}, the bound
for $|a_0|$ follows immediately. We now consider the case $k\geq1$.
To establish an explicit bound for the ratio of gamma functions in
\eqref{eq:LegBoundS1}, we define the following sequence
\[
\psi(k) = \frac{\Gamma(k+1)
\Gamma(\frac{1}{2})}{\Gamma(k+\frac{1}{2})} k^{-1/2}.
\]
It can be easily shown that the sequence $\{\psi(k)\}$ is strictly
decreasing. Hence, we obtain
\begin{align}\label{eq:LegBoundS3}
\psi(k)\leq\psi(1) = 2 ~~ \Rightarrow ~~
 \frac{\Gamma(k+1)\Gamma(\frac{1}{2})}{\Gamma(k+\frac{1}{2})} \leq 2
k^{1/2}.
\end{align}
Combining \eqref{eq:LegBoundS1}, \eqref{eq:LegBoundS2} and
\eqref{eq:LegBoundS3} gives the desired result. This completes the
proof.
\end{proof}

\begin{remark}
Sharp bounds for the Legendre coefficients of analytic functions
were studied in
\cite{wang2012legendre,wang2016gegenbauer,xiang2012error,zhao2013sharp}
with different approaches. The new bound \eqref{eq:LegBound} is
slightly less sharp than the latest result stated in \cite[Corollary
4.5]{wang2016gegenbauer} by a factor of up to $2/\pi^{1/2}$($\approx
1.13$) since we have established a uniform bound for $\psi(k)$ in
\eqref{eq:LegBoundS3}. However, the factor $D(\rho)$ in \eqref{eq:D}
is independent of $k$, which is more convenient when applying
\eqref{eq:LegBound} to refine a simple error bound of
$\mathcal{P}_n(f)$, as will be shown below.
\end{remark}

\begin{remark}
The length of the circumference of $\mathcal{E}_{\rho}$ is given by
$L(\mathcal{E}_{\rho}) = 4E(\varepsilon)/\varepsilon$, where
$\varepsilon = 2/(\rho + \rho^{-1})$ and $E(z)$ is the complete
elliptic integral of the second kind (see, e.g.,
\cite[Equation~(19.9.9)]{olver2010nist}). For various approximation
formulas of $L(\mathcal{E}_{\rho})$, we refer to the survey article
\cite{almkvist1988ellipse} for an extensive discussion. Moreover,
sharp bounds of $L(\mathcal{E}_{\rho})$ are also available (see,
e.g., \cite{Jameson2014ellipse}), i.e.,
\begin{align}\label{ineq:bound ellipse}
L(\mathcal{E}_{\rho}) \leq 2 \left(\rho + \frac{1}{\rho} \right) + 2
\left( \frac{\pi}{2} - 1 \right) \left(\rho - \frac{1}{\rho}
\right), \quad \rho\geq1,
\end{align}
and the above inequality becomes an equality when $\rho=1$ or
$\rho\rightarrow\infty$.
\end{remark}

With the above Lemma at hand, we are now able to establish an
explicit error bound for the Legendre projection $\mathcal{P}_n(f)$
in the $L^{\infty}$ norm. Moreover, we show that the derived error
bound is optimal up to a constant factor.
\begin{theorem}\label{thm:OptimalRateAnal}
Suppose that $f$ is analytic in the region bounded by the ellipse
$\mathcal{E}_{\rho}$ for some $\rho>1$. Then, for each $n\geq0$,
\begin{align}\label{eq:BoundAnal}
\|f - \mathcal{P}_n(f) \|_{\infty} \leq \frac{D(\rho)}{\rho^n}
\left[ \frac{(n+1)^{1/2}}{\rho-1} + \frac{(n+1)^{-1/2}}{(\rho-1)^2}
\right].
\end{align}
Up to a constant factor, the bound on the right hand side is optimal
in the sense that it can not be improved in any negative powers of
$n$ further.
\end{theorem}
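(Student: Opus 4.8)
The plan is to treat the two assertions separately: the explicit bound \eqref{eq:BoundAnal}, and its optimality in the power of $n$.

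For the bound, I would start from the tail of the Legendre series, $f-\mathcal{P}_n(f)=\sum_{k=n+1}^{\infty}a_kP_k$. Using $|P_k(x)|\le1$ on $\mathrm{\Omega}$ together with the new coefficient estimate $|a_k|\le D(\rho)\,k^{1/2}\rho^{-k}$ from Lemma \ref{lem:LegBound} (only the case $k\ge1$ is needed since the sum starts at $n+1$), one gets $\|f-\mathcal{P}_n(f)\|_{\infty}\le D(\rho)\sum_{k=n+1}^{\infty}k^{1/2}\rho^{-k}$, so everything reduces to estimating this tail sum. The device I would use is concavity of $x\mapsto x^{1/2}$: majorize $k^{1/2}$ by its tangent line at $x=n+1$, i.e. $k^{1/2}\le (n+1)^{1/2}+\tfrac12(n+1)^{-1/2}(k-n-1)$, which holds for all $k\ge1$. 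Substituting this and evaluating the two resulting geometric-type series in closed form, namely $\sum_{k\ge n+1}\rho^{-k}=\rho^{-n}/(\rho-1)$ and $\sum_{k\ge n+1}(k-n-1)\rho^{-k}=\rho^{-n}/(\rho-1)^2$, yields \eqref{eq:BoundAnal} directly (in fact with an extra factor $\tfrac12$ on the second term, so the stated inequality follows a fortiori). This part is routine.

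For optimality the plan is to exhibit one analytic function whose Legendre projection error is genuinely of order $n^{1/2}\rho^{-n}$. The natural candidate is $f(x)=1/(x-a)$ with $a=(\rho+\rho^{-1})/2$, the rightmost point of $\mathcal{E}_{\rho}$, so that $f$ is analytic in the region bounded by $\mathcal{E}_{\rho}$ (and in the closed region bounded by $\mathcal{E}_{\rho'}$ for every $1<\rho'<\rho$). Its Legendre coefficients are $a_k=-(2k+1)Q_k(a)$, where $Q_k$ is the Legendre function of the second kind, and — this is the crucial structural feature — they all have the same sign because $Q_k(a)>0$ for $a>1$. Evaluating the truncation error at the endpoint $x=1$, where $P_k(1)=1$, then gives $\|f-\mathcal{P}_n(f)\|_{\infty}\ge|f(1)-\mathcal{P}_n(f)(1)|=\sum_{k=n+1}^{\infty}(2k+1)Q_k(a)$ with no cancellation. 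Invoking the classical large-degree asymptotics $Q_k(a)\sim\sqrt{\pi/(2k)}\,\rho^{-k-1/2}/\sqrt{(\rho-\rho^{-1})/2}$ — equivalently, a matching lower bound extracted by a Laplace-type estimate from the integral representation $Q_k(\cosh\xi)=\int_{\xi}^{\infty}e^{-(k+1/2)s}\,(2\cosh s-2\cosh\xi)^{-1/2}\,ds$ with $\rho=e^{\xi}$ — we obtain $(2k+1)Q_k(a)\asymp k^{1/2}\rho^{-k}$, and summing the tail by the same closed-form geometric identity used above produces a lower bound of exact order $n^{1/2}\rho^{-n}$. Consequently no estimate of the form $C(\rho)\,n^{\beta}\rho^{-n}$ can hold uniformly over all functions analytic in the region bounded by $\mathcal{E}_{\rho}$ unless $\beta\ge\tfrac12$, which is the claimed optimality.

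The main obstacle is entirely in the optimality half. Its delicate points are: (i) one needs the precise order of $|a_k|$, not merely an $O$-bound — hence a two-sided estimate of $Q_k(a)$ via its asymptotics (or a careful integral estimate); (ii) one must use the constant sign of the $a_k$ so that evaluating at $x=1$ reveals the full size of the tail rather than a cancelled sum; and (iii) one should phrase the conclusion carefully, since this $f$ has its singularity exactly on $\mathcal{E}_{\rho}$ (so the constant $D(\rho)$ is infinite for it), the honest statement being about the impossibility of a uniform bound with exponent below $\tfrac12$ over the class of functions analytic inside $\mathcal{E}_{\rho}$.
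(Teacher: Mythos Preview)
Your proposal is correct and follows essentially the same strategy as the paper. For the bound, the paper uses the simpler inequality $k^{1/2}\le k\,(n+1)^{-1/2}$ for $k\ge n+1$ in place of your tangent-line estimate; both reduce the tail to $\sum_{k\ge n+1}k\rho^{-k}$ and give the stated inequality (yours with the extra factor $\tfrac12$ you noticed). For optimality, the paper likewise exhibits a simple-pole example, $f(x)=(x-2)^{-1}$, obtains the closed form of $a_k$ via the contour integral of Lemma~\ref{lem:LegContour} and the residue theorem (a hypergeometric expression equivalent to your $-(2k+1)Q_k(a)$), observes that all $a_k$ have one sign, evaluates at $x=1$ to get $\|f-\mathcal{P}_n(f)\|_\infty\ge|a_{n+1}|\asymp n^{1/2}(2+\sqrt{3})^{-n}$, and argues by contradiction. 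Your route through the classical identity $a_k=-(2k+1)Q_k(a)$ and the Laplace--Heine asymptotics of $Q_k$ is a self-contained alternative that avoids invoking Lemma~\ref{lem:LegContour}. Your caveat~(iii) is well taken: the paper addresses the same issue by fixing the pole at $a=2$, taking $\rho=2+\sqrt{3}-\epsilon$, and letting $\epsilon\to0$ somewhat informally; both arguments ultimately show that the exponent $\tfrac12$ on $n$ cannot be lowered.
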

\begin{proof}
As a consequence of Lemma \ref{lem:LegBound}, we obtain that
\begin{align}\label{eq:LegProjBoundS1}
\|f - \mathcal{P}_n(f) \|_{\infty} \leq \sum_{k=n+1}^{\infty} |a_k|
\leq D(\rho) \sum_{k=n+1}^{\infty} \frac{k^{1/2}}{\rho^{k}}.
\end{align}
For the last sum in \eqref{eq:LegProjBoundS1}, we have
\begin{align}
\sum_{k=n+1}^{\infty} \frac{k^{1/2}}{\rho^{k}} &\leq (n+1)^{-1/2}
\sum_{k=n+1}^{\infty} \frac{k}{\rho^k} = \frac{1}{\rho^n} \left[
\frac{(n+1)^{1/2}}{\rho-1} + \frac{(n+1)^{-1/2}}{(\rho-1)^2}
\right]. \nonumber
\end{align}
This proves the bound \eqref{eq:BoundAnal}.

We now turn to prove the optimality of the bound
\eqref{eq:BoundAnal}. By contradiction suppose that it can be
further improved in a negative power of $n$, i.e.,
\begin{align}\label{eq:LegProjBoundS3}
\|f - \mathcal{P}_n(f) \|_{\infty} \leq  n^{-\gamma}
\frac{D(\rho)}{\rho^n} \left[ \frac{(n+1)^{1/2}}{\rho-1} +
\frac{(n+1)^{-1/2}}{(\rho-1)^2} \right],
\end{align}
where $\gamma>0$. Let us consider a concrete function, e.g., $f(x) =
(x-2)^{-1}$. It is easily seen that this function has a simple pole
at $x=2$ and therefore $\rho\leq 2+\sqrt{3}-\epsilon$, where
$\epsilon>0$ may be taken arbitrary small. On the other hand, using
Lemma \ref{lem:LegContour} and the residue theorem, we can write the
Legendre coefficients of $f(x)$ as
\begin{align}\label{eq:LegProjBoundS4}
a_k & = \frac{\Gamma(k+1)
\Gamma(\frac{1}{2})}{\Gamma(k+\frac{1}{2})} {}_2\mathrm{
F}_1\left[\begin{matrix} k & + 1, \frac{1}{2}
\\   k + \frac{3}{2}  \hspace{-1cm} & \end{matrix} ;  \frac{1}{(2+\sqrt{3})^{2}} \right]
\frac{(-2)}{(2+\sqrt{3})^{k+1}}.
\end{align}
Clearly, $a_k<0$ for all $k\geq0$, and it is easy to check that the
sequence $\{-a_{k}\}_{k=0}^{\infty}$ is strictly decreasing. Now, we
consider the error of the Legendre projection at the point $x=1$. In
view of $P_k(1)=1$ for $k\geq0$, we obtain that
\begin{align}
\left| f(x) - \mathcal{P}_n(f) \right|_{x=1} &=
\sum_{k=n+1}^{\infty} (-a_{k}) \geq -a_{n+1}.  \nonumber
\end{align}
Thus, combining the above bound with \eqref{eq:LegProjBoundS3}
yields
\begin{align}
-a_{n+1} \leq \|f(x) - \mathcal{P}_n(f)\|_{\infty} \leq n^{-\gamma}
\frac{D(\rho)}{\rho^n} \left[ \frac{(n+1)^{1/2}}{\rho-1} +
\frac{(n+1)^{-1/2}}{(\rho-1)^2} \right].
\end{align}
Furthermore, from \eqref{eq:LegProjBoundS4} we can deduce that the
lower bound of $\|f(x) - \mathcal{P}_n(f)\|_{\infty}$ behaves like
$|a_{n+1}|=O( n^{1/2}(2+\sqrt{3})^{-n})$ and the upper bound of
$\|f(x) - \mathcal{P}_n(f)\|_{\infty}$ behaves like
$O(n^{1/2-\gamma}(2+\sqrt{3}-\epsilon)^{-n})$ as
$n\rightarrow\infty$. Clearly, this leads to an obvious
contradiction since the upper bound may be smaller than the lower
bound when $\epsilon$ is sufficiently small. Therefore, we can
conclude that the derived bound \eqref{eq:BoundAnal} is optimal in
the sense that it can not be improved in any negative powers of $n$
further. This completes the proof.
\end{proof}

\begin{remark}
From \cite[p.~131]{cheney1998approximation} we know that
\begin{align}
\frac{\pi}{4} \max_{k\geq n} \left\{ |c_k| \right\} \leq \|f -
\mathcal{B}_n(f) \|_{\infty} \leq \sum_{k=n+1}^{\infty} |c_k|.
\end{align}
Moreover, from \cite[p.~95]{bernstein1912cheb} we know that $|c_k|
\leq 2\max_{z\in\mathcal{E}_{\rho}}|f(z)| \rho^{-k}$, and thus the
rate of convergence of $\mathcal{B}_n(f)$ is $O(\rho^{-n})$ as
$n\rightarrow\infty$, i.e., $\|f - \mathcal{B}_n(f)
\|_{\infty}=O(\rho^{-n})$. Comparing this with \eqref{eq:BoundAnal},
it is easy to see that the rate of convergence of $\mathcal{B}_n(f)$
is $O(n^{1/2})$ faster than that of $\mathcal{P}_n(f)$. Moreover,
comparing \eqref{eq:BoundAnal} and \eqref{eq:ChebAnal}, we see that
the rate of convergence of $\mathcal{T}_n(f)$ is also $O(n^{1/2})$
faster than that of $\mathcal{P}_n(f)$. These explain the
convergence behavior of $\mathcal{P}_n(f),\mathcal{T}_n(f)$ and
$\mathcal{B}_n(f)$ illustrated in Figure \ref{fig:ExamI}.
\end{remark}

\section{Optimal rate of convergence of $\mathcal{P}_n(f)$ for
functions with derivatives of bounded
variation}\label{sec:Piecewise} In this section we study optimal
rate of convergence of $\mathcal{P}_n(f)$ for differentiable
functions with derivatives of bounded variation. We start with the
case of piecewise analytic functions and then extend our discussion
to the case of functions whose $m$th order derivative is of bounded
variation. Throughout this paper, we denote by $K$ a generic
positive constant independent of $n$.

\subsection{Piecewise analytic functions}
We first introduce the definition of piecewise analytic function
(see, e.g., \cite{saff1989poly}).

\begin{definition}\label{def:PiecewiseAnal}
Let $f$ be a piecewise analytic function, by which we mean there
exist a set of points
\[
-1< \xi_1 < \xi_2 < \cdots < \xi_{\ell} < 1, \quad \ell\geq1,
\]
such that the restriction of $f$ to each
$[-1,\xi_1]$,$[\xi_1,\xi_2]$,$\ldots$,$[\xi_{\ell},1]$ has an
analytic continuation to a neighborhood of this closed interval, but
$f$ itself is not analytic at each point $\xi_1,\ldots,\xi_{\ell}$.
In the following discussion, we will denote by
$\mathrm{PA}(\mathrm{\Omega},\vec{\xi})$, where
$\vec{\xi}=(\xi_1,\ldots,\xi_{\ell})^T\in \mathbb{R}^{\ell}$ and
$\cdot^T$ denotes the transpose, the set of piecewise analytic
functions for notational simplicity.
\end{definition}

In order to analyze the convergence behavior of $\mathcal{P}_n(f)$,
we first rewrite it as
\begin{align}\label{eq:LegPn}
\mathcal{P}_n(f) = \sum_{k=0}^{n} P_k(x) \left(k+\frac{1}{2} \right)
\int_{-1}^{1} f(y) P_k(y) \mathrm{d}y = \int_{-1}^{1} f(y) D_n(x,y)
\mathrm{d}y,
\end{align}
where $D_n(x,y)$ is the Dirichlet kernel of Legendre polynomials
defined by
\begin{align}\label{eq:HnExp1}
D_n(x,y) &= \sum_{k=0}^{n} \left(k + \frac{1}{2} \right) P_k(x)
P_k(y).
\end{align}
By means of the Christoffel-Darboux identity for Legendre
polynomials \cite[p.~51]{shen2011spectral}, the Dirichlet kernel can
also be written as
\begin{align}\label{eq:HnExp2}
D_n(x,y) &= \frac{n+1}{2} \left[ \frac{P_{n+1}(x) P_n(y) -
P_{n+1}(y) P_n(x)}{x-y} \right].
\end{align}

In the following we give two useful lemmas.
\begin{lemma}\label{lem:Bound}
For $|x|\leq1$ and $n\geq0$, we have
\begin{align}
|P_n(x)| \leq \sqrt{\frac{2}{\pi}} \left(n + \frac{1}{2}
\right)^{-1/2} \phi_n(x),
\end{align}
where
\begin{align}\label{def:phi}
\phi_n(x) = \min\left\{(1-x^2)^{-1/4}, \sqrt{\frac{\pi}{2}} \left(
n+\frac{1}{2} \right)^{1/2} \right\}.
\end{align}
\end{lemma}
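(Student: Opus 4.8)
\emph{Proof proposal.} The claimed estimate is just the pair of inequalities
\[
|P_n(x)|\le 1 \qquad\text{and}\qquad |P_n(x)|\le \Bigl(\tfrac{2}{\pi(n+\frac12)}\Bigr)^{1/2}(1-x^2)^{-1/4},\qquad x\in[-1,1],
\]
bundled into one: distributing the positive factor $\sqrt{2/\pi}\,(n+\tfrac12)^{-1/2}$ through the $\min$ defining $\phi_n$ shows that $\sqrt{2/\pi}\,(n+\tfrac12)^{-1/2}\phi_n(x)$ is precisely the smaller of the two right-hand sides above. So the plan is to prove these two bounds separately and then take the pointwise minimum. The first, $|P_n(x)|\le1$, is the elementary fact already quoted in the paper from \cite[p.~94]{shen2011spectral}; alternatively it follows at once from Laplace's representation $P_n(\cos\theta)=\frac1\pi\int_0^\pi(\cos\theta+i\sin\theta\cos t)^n\,dt$ together with $|\cos\theta+i\sin\theta\cos t|\le1$.

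The second is the sharp form of Bernstein's inequality for Legendre polynomials; with $x=\cos\theta$, $\theta\in(0,\pi)$, it reads $(\sin\theta)^{1/2}|P_n(\cos\theta)|\le\bigl(2/(\pi(n+\tfrac12))\bigr)^{1/2}$, and I would prove it by the Liouville--Sonin method. Setting $u(\theta)=(\sin\theta)^{1/2}P_n(\cos\theta)$ turns the $\theta$-form of Legendre's equation into $u''+g(\theta)u=0$ with $g(\theta)=(n+\tfrac12)^2+\tfrac14\csc^2\theta>0$. For the Sonin energy $E(\theta)=u(\theta)^2+u'(\theta)^2/g(\theta)$ one computes $E'=-(g'/g^2)(u')^2$; since $g$ decreases on $(0,\pi/2]$ and increases on $[\pi/2,\pi)$, $E$ is nondecreasing on $(0,\pi/2]$ and nonincreasing on $[\pi/2,\pi)$, hence $E(\theta)\le E(\pi/2)$ for all $\theta\in(0,\pi)$. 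As $u$ vanishes at $\theta=0,\pi$ and is not identically zero, $|u|$ attains its maximum at some interior $\theta_0$, where $u'(\theta_0)=0$, so $u(\theta_0)^2=E(\theta_0)\le E(\pi/2)$; therefore $(\sin\theta)\,P_n(\cos\theta)^2=u(\theta)^2\le E(\pi/2)$ for every $\theta$. It remains to verify $E(\pi/2)\le\frac{2}{\pi(n+\frac12)}$; using $g(\pi/2)=(n+\tfrac12)^2+\tfrac14$, $u(\pi/2)=P_n(0)$ and $u'(\pi/2)=-P_n'(0)=-nP_{n-1}(0)$, this collapses to an elementary inequality for $P_n(0)^2=\bigl(\binom{2m}{m}4^{-m}\bigr)^2$ with $m=\lfloor n/2\rfloor$, i.e.\ to a sharp Wallis-type estimate for the central binomial coefficient (the case $n=0$ being trivial).

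Putting the two bounds together gives, for each $x\in[-1,1]$, $|P_n(x)|\le\min\bigl\{1,\,(2/(\pi(n+\tfrac12)))^{1/2}(1-x^2)^{-1/4}\bigr\}=\sqrt{2/\pi}\,(n+\tfrac12)^{-1/2}\phi_n(x)$, which is the assertion. The only delicate step is the last one: extracting the \emph{exact} constant $\sqrt{2/\pi}$. Since $E(\pi/2)\le\frac{2}{\pi(n+\frac12)}$ is essentially an equality at $\theta=\pi/2$, the textbook Wallis bound $\binom{2m}{m}4^{-m}\le(\pi(m+\tfrac14))^{-1/2}$ is, for odd $n$, just barely too weak and must be replaced by its sharp refinement. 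As this sharpened Bernstein-type inequality is a classical result (Bernstein's inequality, with the passage from $n$ to $n+\tfrac12$ going back to Antonov--Hol\v{s}evnikov and Lorch), in a final write-up I would simply cite it rather than reproduce the delicate binomial estimate.
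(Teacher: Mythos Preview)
Your proposal is correct and follows the same approach as the paper: the lemma is just the combination of the trivial bound $|P_n(x)|\le 1$ with the sharp Bernstein-type inequality $(1-x^2)^{1/4}|P_n(x)|<\sqrt{2/\pi}\,(n+\tfrac12)^{-1/2}$ of Antonov--Hol\v{s}evnikov, which the paper simply cites from \cite{Antonov1981estimate}. Your Liouville--Sonin sketch of the latter is extra detail the paper does not give, and you are right that the constant extraction is the only delicate point---which is precisely why the paper (and, as you say, your own final write-up) invokes the Antonov--Hol\v{s}evnikov result directly rather than reproving it.
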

\begin{proof}
Recall the Bernstein-type inequality of Legendre polynomials
\cite{Antonov1981estimate}, i.e.,
\begin{align}
(1-x^2)^{1/4} |P_n(x)| < \sqrt{\frac{2}{\pi}}
\left(n+\frac{1}{2}\right)^{-1/2}, \quad  x\in[-1,1], \nonumber
\end{align}
and the bound is optimal in the sense that the factor
$(n+1/2)^{-1/2}$ can not be improved to $(n+1/2+\epsilon)^{-1/2}$
for any $\epsilon>0$ and the constant $\sqrt{2/\pi}$ is best
possible. On the other hand, recall the well known inequality
$|P_n(x)|\leq1$. Combining these two inequalities give the desired
result.
\end{proof}

\begin{lemma}\label{prop:Hn}
Let $|x|\leq1$ and let $\delta\in(0,1)$.
\begin{enumerate}
\item If $|y|\leq1$, then
\begin{align}\label{eq:HnBound1}
|D_n(x,y)| &\leq \frac{(n+1)^2}{2}.
\end{align}

\item If $|y|\leq 1 - \delta$, then
\begin{align}\label{eq:HnBound2}
|D_n(x,y)| \leq K n,  \quad
n\gg1.
\end{align}
\end{enumerate}
\end{lemma}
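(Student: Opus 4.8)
The plan is to estimate the Dirichlet kernel $D_n(x,y)$ using the two alternative representations \eqref{eq:HnExp1} and \eqref{eq:HnExp2}, together with the pointwise bounds on Legendre polynomials from Lemma \ref{lem:Bound}. For part (1), the roughest bound suffices: starting from \eqref{eq:HnExp1} and applying $|P_k(x)|\leq 1$, $|P_k(y)|\leq 1$, I would write $|D_n(x,y)| \leq \sum_{k=0}^{n} (k+\tfrac12) = \tfrac{(n+1)^2}{2}$, which is exactly \eqref{eq:HnBound1}. No subtlety here.

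For part (2), when $y$ is bounded away from the endpoints, $|y|\leq 1-\delta$, the idea is to exploit the Christoffel--Darboux form \eqref{eq:HnExp2} and the fact that the denominator $x-y$ can only be small when $x$ is itself near $y$, hence also bounded away from $\pm 1$. I would split into two cases according to the size of $|x-y|$. When $|x-y|$ is not too small (say $|x-y| \geq (1-(1-\delta)^2)^{1/2}/2$ or some fixed fraction of the distance from $y$ to the boundary), I bound the numerator of \eqref{eq:HnExp2} crudely by $2$ using $|P_{n+1}|,|P_n|\leq 1$, and the whole expression is $O(n)$ because of the prefactor $\tfrac{n+1}{2}$ divided by a constant. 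When $|x-y|$ is small, $x$ lies in a neighborhood of $y$ that is still bounded away from $\pm 1$; here I would instead return to \eqref{eq:HnExp1} but apply Lemma \ref{lem:Bound} to extract the decay $|P_k(x)| \leq \sqrt{2/\pi}\,(k+\tfrac12)^{-1/2}(1-x^2)^{-1/4}$ and similarly for $P_k(y)$, with $(1-x^2)^{-1/4}$ and $(1-y^2)^{-1/4}$ both bounded by a constant $K_\delta$ depending only on $\delta$. Then $|D_n(x,y)| \leq \tfrac{2}{\pi} K_\delta^2 \sum_{k=0}^{n} (k+\tfrac12)\,(k+\tfrac12)^{-1} = \tfrac{2}{\pi}K_\delta^2 (n+1) = O(n)$. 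Combining the two cases gives \eqref{eq:HnBound2} with a constant $K$ depending only on $\delta$.

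I expect the main obstacle to be the case $|x-y|$ small in part (2): one cannot use the Christoffel--Darboux form there (the denominator blows up), so one must fall back on the summed form \eqref{eq:HnExp1}, and the key point is that the geometry forces $x$ to stay away from $\pm 1$ so that the Bernstein-type bound of Lemma \ref{lem:Bound} is applicable with a $\delta$-dependent constant. Making the case split clean — choosing the threshold on $|x-y|$ so that in the small-$|x-y|$ regime we genuinely have $1-x^2 \geq c_\delta > 0$ — is the only place requiring care; once that is set up, both estimates are a one-line summation. An alternative, slightly slicker route in the small-$|x-y|$ regime is to use a derivative/telescoping estimate on the numerator of \eqref{eq:HnExp2} via the mean value theorem together with bounds on $P_n'$, but the direct summation through \eqref{eq:HnExp1} is more elementary and avoids invoking derivative bounds not yet stated in the excerpt, so I would present that version.
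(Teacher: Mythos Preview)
Your proposal is correct and follows essentially the same route as the paper: part~(1) is identical, and for part~(2) both you and the paper split on the size of $|x-y|$, using the summed form \eqref{eq:HnExp1} with Lemma~\ref{lem:Bound} when $|x-y|$ is small (so that $x$ is forced away from $\pm 1$) and the Christoffel--Darboux form \eqref{eq:HnExp2} when $|x-y|$ is bounded below. The only cosmetic difference is that in the large-$|x-y|$ case the paper also invokes Lemma~\ref{lem:Bound} for $P_n(y)$ and $P_{n+1}(y)$, squeezing out an $O(n^{1/2})$ bound there instead of your $O(n)$; since the small-$|x-y|$ case already gives $O(n)$, your cruder estimate is perfectly adequate for the stated conclusion.
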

\begin{proof}
As for \eqref{eq:HnBound1}, it follows from \eqref{eq:HnExp1} and
the inequality $|P_k(x)|\leq1$. As for \eqref{eq:HnBound2}, we split
our discussion into two cases: $|x-y|<\delta/2$ or
$|x-y|\geq\delta/2$. In the case when $|x-y|<\delta/2$. By
\eqref{eq:HnExp1} and Lemma \ref{lem:Bound} we obtain that
\begin{align}
|D_n(x,y)| \leq \frac{2}{\pi} \sum_{k=0}^{n} \phi_k(x) \phi_k(y)
\leq \frac{2(n+1)}{\pi} (1-x^2)^{-1/4} (1-y^2)^{-1/4}.
\end{align}
For $|y|\leq1-\delta$, it is easily verified that $|x|\leq
1-\delta/2$, and therefore,
\begin{align}\label{eq:HnS1}
|D_n(x,y)| &\leq \frac{2(n+1)}{\pi} \left( 1 - \left(1 -
\frac{\delta}{2} \right)^2 \right)^{-1/4} (1-(1-\delta)^2)^{-1/4}
\nonumber \\
&= \frac{2(n+1)}{\pi} \delta^{-1/2} \left(1 - \frac{\delta}{4}
\right)^{-1/4} (2-\delta)^{-1/4} \nonumber \\
&= O(n).
\end{align}
Next, we consider the case $|x-y|\geq \delta/2$. From
\eqref{eq:HnExp2} and Lemma \ref{lem:Bound} it follows that
\begin{align}\label{eq:HnS2}
|D_n(x,y)| &\leq \frac{n+1}{\delta} \sqrt{\frac{2}{\pi}} \left(
\left(n + \frac{1}{2} \right)^{-1/2} \phi_n(y) + \left(n +
\frac{3}{2} \right)^{-1/2} \phi_{n+1}(y) \right). \nonumber \\
&\leq \frac{2(n+1)}{\delta} \sqrt{\frac{2}{\pi}} \left(n
+ \frac{1}{2} \right)^{-1/2} (1-y^2)^{-1/4} \nonumber \\
&\leq \frac{2}{\delta^{5/4}} \sqrt{\frac{2}{\pi}} (2-\delta)^{-1/4}
(n+1) \left(n + \frac{1}{2} \right)^{-1/2}
\nonumber \\
&= O(n^{1/2}).
\end{align}
Finally, the desired result \eqref{eq:HnBound2} follows from
\eqref{eq:HnS1} and \eqref{eq:HnS2}. This completes the proof.
\end{proof}

We are now ready to state the first main result of this section.
\begin{theorem}\label{thm:LegPieceRate}
Assume that $f\in C^{m-1}(\mathrm{\Omega})\cap
\mathrm{PA}(\mathrm{\Omega},\vec{\xi})$ for some integer $m\in
\mathbb{N}$ and some $\vec{\xi}\in \mathbb{R}^{\ell}$ with
$\ell\geq1$. Then, for $n\gg1$, we have
\begin{align}
\|f - \mathcal{P}_n(f)\|_{\infty} \leq K n^{-m}.
\end{align}
Up to constant factors, the bound on the right hand side is optimal
in the sense that it is the same as that of $\mathcal{B}_n(f)$.
\end{theorem}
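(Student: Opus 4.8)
The plan is to localize the problem by a smooth partition of unity and then to play off two complementary properties of the Legendre projection against one another: the Bernstein‑type bound of Lemma~\ref{lem:Bound} in the interior, and the Christoffel--Darboux form \eqref{eq:HnExp2} of the Dirichlet kernel near the endpoints. Throughout I use the known decay $|a_k(g)|=O(k^{-m-1/2})$, valid whenever $g\in C^{m-1}(\Omega)$ and $g^{(m)}$ is of bounded variation (this is the estimate behind \eqref{eq:sum}; see \cite{wang2012legendre,wang2018legendre}).

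First I would fix $\delta\in(0,\tfrac12)$ so small that $\xi_1,\ldots,\xi_\ell\in(-1+2\delta,1-2\delta)$, and choose $C^\infty$ functions $\chi_-,\chi_0,\chi_+$ on $\Omega$ with $\chi_-+\chi_0+\chi_+\equiv1$, $\chi_\pm\equiv1$ near $\pm1$ and $\chi_\pm\equiv0$ on $\{|x|\le1-2\delta\}$; then $\chi_0\equiv0$ near $\pm1$ and $\chi_0\equiv1$ on a neighbourhood of every $\xi_j$. By linearity of $\mathcal P_n$,
\[
f-\mathcal P_n(f)=\bigl(f\chi_--\mathcal P_n(f\chi_-)\bigr)+\bigl(f\chi_0-\mathcal P_n(f\chi_0)\bigr)+\bigl(f\chi_+-\mathcal P_n(f\chi_+)\bigr).
\]
Since no $\xi_j$ lies in $\mathrm{supp}\,\chi_\pm$, the function $f$ is analytic on a neighbourhood of $\mathrm{supp}\,\chi_\pm$, so $f\chi_\pm\in C^\infty(\Omega)$ and its Legendre coefficients decay faster than any power of $k$; hence $\|f\chi_\pm-\mathcal P_n(f\chi_\pm)\|_\infty\le\sum_{k>n}|a_k(f\chi_\pm)|$ decays faster than any power of $n$. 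It remains to treat the central piece $G:=f\chi_0$, which lies in $C^{m-1}(\Omega)$ with $G^{(m)}$ of bounded variation and vanishes identically near both endpoints.

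To bound $\|G-\mathcal P_n(G)\|_\infty$ I would split $\Omega$ into the interior region $|x|\le1-\delta/2$ and the two endpoint layers. On the interior region, Lemma~\ref{lem:Bound} gives $|P_k(x)|\le C_\delta k^{-1/2}$, so
\[
|G(x)-\mathcal P_n(G)(x)|\le\sum_{k>n}|a_k(G)|\,|P_k(x)|\le C_\delta\sum_{k>n}k^{-1/2}k^{-m-1/2}=O(n^{-m});
\]
this is where the spurious $n^{1/2}$ of \eqref{eq:errorL}--\eqref{eq:sum} is absorbed, the $k^{-1/2}$ from $P_k$ compensating the tail $\sum k^{-m-1/2}$. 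On an endpoint layer, say $1-\delta/2\le x\le1$ (the other one being symmetric), $G(x)=0$, so $G(x)-\mathcal P_n(G)(x)=-\int_{-1}^1 G(y)D_n(x,y)\,\mathrm{d}y$; inserting \eqref{eq:HnExp2} and using that $x-y\ge\delta/2>0$ on $\mathrm{supp}\,G$ turns this into
\[
-\frac{n+1}{2}\left[P_{n+1}(x)\int_{-1}^1\frac{G(y)P_n(y)}{x-y}\,\mathrm{d}y-P_n(x)\int_{-1}^1\frac{G(y)P_{n+1}(y)}{x-y}\,\mathrm{d}y\right].
\]
Writing $H_x(y):=G(y)/(x-y)$ --- which, since $G$ vanishes to all orders near $y=1$, again lies in $C^{m-1}(\Omega)$ with $H_x^{(m)}$ of bounded variation, uniformly in $x$ --- each inner integral equals $a_k(H_x)/(k+\tfrac12)=O(n^{-m-3/2})$, while $|P_n(x)|,|P_{n+1}(x)|\le1$; hence the whole expression is $O\!\bigl(\tfrac{n+1}{2}\cdot n^{-m-3/2}\bigr)=O(n^{-m-1/2})$. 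Combining the two ranges gives $\|G-\mathcal P_n(G)\|_\infty=O(n^{-m})$, and therefore $\|f-\mathcal P_n(f)\|_\infty\le K n^{-m}$ for $n\gg1$.

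I expect the endpoint layer to be the main obstacle: there $|P_k(x)|\le1$ is sharp, and merely summing $|a_k(G)|$ returns the suboptimal $O(n^{-m+1/2})$, so the half‑power must be recovered structurally. The device above --- localize so that $G\equiv0$ near $\pm1$, then read the extra decay off the product form \eqref{eq:HnExp2}, the crucial point being the cancellation of the prefactor $(n+1)/2$ of the Christoffel--Darboux kernel against the $1/(k+\tfrac12)$ produced by $\int_{-1}^1 H_x(y)P_k(y)\,\mathrm{d}y$ --- is exactly what removes the extra $\sqrt n$. For optimality, it is classical \cite[Chapter~7]{timan1963approximation} that $\|f-\mathcal B_n(f)\|_\infty$ is of exact order $n^{-m}$ for a function with an $m$th‑order interior singularity; since $\|f-\mathcal P_n(f)\|_\infty\ge\|f-\mathcal B_n(f)\|_\infty$ for every $n$, the exponent in $Kn^{-m}$ cannot be increased, and one may also check this directly on the model function $(x-\xi)_{+}^{m}$, whose Legendre projection error near $x=\xi$ is at least a positive constant times $n^{-m}$.
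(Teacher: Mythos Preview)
Your argument is correct and takes a genuinely different route from the paper's. The paper invokes the Saff--Totik theorem \cite[Theorem~3]{saff1989poly}, which supplies a polynomial $p_n$ with $|f(x)-p_n(x)|\le Cn^{-m}e^{-c(nd(x))^\alpha}$, and then bounds $|\mathcal P_n(f-p_n)|=\bigl|\int (f-p_n)(y)D_n(x,y)\,\mathrm{d}y\bigr|$ by integrating this exponentially localized error against the Dirichlet kernel, using the crude and interior bounds of Lemma~\ref{prop:Hn}. You bypass Saff--Totik entirely: after a smooth partition of unity that pushes the singularities into a piece $G$ compactly supported in $(-1,1)$, you combine the known decay $|a_k(G)|=O(k^{-m-1/2})$ with the Bernstein bound $|P_k(x)|\le C_\delta k^{-1/2}$ on the interior, while near the endpoints you use $G\equiv0$ and the Christoffel--Darboux form \eqref{eq:HnExp2} to reduce matters to the Legendre coefficients of $H_x(y)=G(y)/(x-y)$, gaining the missing half-power from the factor $1/(k+\tfrac12)$. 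Your approach is more elementary and self-contained (it needs only an integration-by-parts coefficient estimate and standard inequalities for $P_k$), and it makes transparent exactly where and how the spurious $\sqrt n$ of \eqref{eq:sum} is recovered; the paper's route is shorter once Saff--Totik is taken off the shelf and packages the localization into a single approximation inequality rather than a cutoff. Two small points worth tightening: first, the bound $|a_k(g)|=O(k^{-m-1/2})$ from \cite{wang2012legendre,wang2018legendre} carries a weighted condition $\|g^{(m)}\|_T<\infty$ in addition to bounded variation, which is automatic here since $G$ and $H_x$ vanish identically near $\pm1$, but you should say so; second, you should fix the cutoffs so that $\chi_\pm\equiv1$ on $\{|x|\ge 1-\delta\}$ (not merely ``near $\pm1$''), so that $\mathrm{supp}\,G\subset[-1+\delta,1-\delta]$ and the claimed separation $x-y\ge\delta/2$ on the endpoint layer is actually guaranteed.
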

\begin{proof}
Since $f\in C^{m-1}(\mathrm{\Omega})$ and is piecewise analytic on
$\mathrm{\Omega}$, we know from \cite[Theorem~3]{saff1989poly} that
there exists a polynomial $p_n$ of degree $n$ such that for all
$x\in\mathrm{\Omega}$
\begin{align}\label{eq:SaffBound}
|f(x) - p_n(x) | \leq \frac{C}{n^{m}} e^{-c n^{\alpha} d(x)^\beta },
\end{align}
where $\alpha\in(0,1)$ and $\beta\geq\alpha$ or $\alpha=1$ and
$\beta>1$, $d(x)=\min_{1\leq k \leq \ell}|x-\xi_k|$ and $C,c$ are
some positive constants. 
Taking $\alpha=\beta\in(0,1)$ and recalling that
$\mathcal{P}_n(f)\equiv f$ whenever $f$ is a polynomial of degree up
to $n$, we immediately obtain
\begin{align}\label{eq:ModFunS1}
|f - \mathcal{P}_n(f)| &\leq |f - p_n| + |\mathcal{P}_n(f-p_n)|
\nonumber \\
&\leq \frac{C}{n^{m}} e^{-c (n d(x))^\alpha} + \frac{C}{n^{m}}
\int_{-1}^{1} e^{-c (nd(y))^{\alpha}} | D_n(x,y) | \mathrm{d}y,
\end{align}
where we have used \eqref{eq:SaffBound} and \eqref{eq:LegPn} in the
last step. It remains to show the last integral in
\eqref{eq:ModFunS1} behaves like $O(1)$ as $n\rightarrow\infty$. For
simplicity of presentation, we denote it by $I$. Moreover, we let
$I_1=[\xi_1-\epsilon,\xi_1+\epsilon],\ldots,I_{\ell}=[\xi_{\ell}-\epsilon,\xi_{\ell}+\epsilon]$,
where $\epsilon>0$ is chosen to be small enough so that these
subintervals $I_1,\ldots,I_{\ell}$ are pairwise disjoint and are
contained in the interior of $\mathrm{\Omega}$, i.e.,
$I_1,\ldots,I_{\ell}\subset\mathrm{\Omega}$. Then
\begin{align}\label{eq:ModFunS2}
I = \sum_{k=1}^{\ell} \int_{I_k} e^{-c (nd(y))^{\alpha}} | D_n(x,y)
| \mathrm{d}y + \int_{\mathrm{\Omega}\backslash\bigcup_{k=1}^{\ell}
I_k} e^{-c (nd(y))^{\alpha}} | D_n(x,y) | \mathrm{d}y.
\end{align}
For the former sum in \eqref{eq:ModFunS2}, notice that
$d(y)=|y-\xi_k|$ when $y\in I_{k}$, and thus we get
\begin{align}
\sum_{k=1}^{\ell} \int_{I_k} e^{-c (nd(y))^{\alpha}} | D_n(x,y) |
\mathrm{d}y &= \sum_{k=1}^{\ell}
\int_{\xi_k-\epsilon}^{\xi_k+\epsilon}
e^{-c (n|y-\xi_k|)^{\alpha}} | D_n(x,y) | \mathrm{d}y   \nonumber \\
&= \sum_{k=1}^{\ell} \int_{-\epsilon}^{\epsilon} e^{-c
(n|t|)^{\alpha}} | D_n(x,t+\xi_k) | \mathrm{d}t,  \nonumber
\end{align}
where we applied the change of variable $y=t+\xi_k$ in the last
step. Furthermore, using \eqref{eq:HnBound2} and a change of
variable $z=nt$, we obtain
\begin{align}\label{eq:ModFunS3}
\sum_{k=1}^{\ell} \int_{I_k} e^{-c (nd(y))^{\alpha}} | D_n(x,y) |
\mathrm{d}y &\leq 2 K \ell n \int_{0}^{\epsilon} e^{-c(nt)^{\alpha}}
\mathrm{d}t \nonumber \\
&\leq 2K\ell \int_{0}^{\infty} e^{-cz^{\alpha}} \mathrm{d}z
\nonumber \\
&= 2K\ell \frac{\Gamma(\alpha^{-1})}{\alpha c^{1/\alpha}}.
\end{align}
For the second term in \eqref{eq:ModFunS2}, notice that $d(y)\geq
\epsilon$ when $y\in\mathrm{\Omega}\backslash\bigcup_{k=1}^{\ell}
I_k$, we obtain
\begin{align}\label{eq:ModFunS4}
\int_{\mathrm{\Omega}\backslash\bigcup_{k=1}^{\ell} I_k} e^{-c
(nd(y))^{\alpha}} | D_n(x,y) | \mathrm{d}y &\leq e^{-c
(n\epsilon)^{\alpha}}
\int_{\mathrm{\Omega}\backslash\bigcup_{k=1}^{\ell} I_k} | D_n(x,y)
|
\mathrm{d}y \nonumber \\
&\leq e^{-c (n\epsilon)^{\alpha}}(n+1)^2,
\end{align}
where we have used \eqref{eq:HnBound1} in the last step. Combining
\eqref{eq:ModFunS1}, \eqref{eq:ModFunS3} and \eqref{eq:ModFunS4}
gives the desired result. This completes the proof.
\end{proof}

\begin{remark}
It is clear that the test functions
$f(x)=(x-\frac{1}{2})_{+}^3,|\sin(5x)|$ are piecewise analytic
functions on $\mathrm{\Omega}$ and they correspond to $m=3$ and
$m=1$, respectively. As a consequence, we can deduce from Theorem
\ref{thm:LegPieceRate} that the rates of convergence of
$\mathcal{P}_n(f)$ are $O(n^{-3})$ and $O(n^{-1})$, respectively.
Clearly, these rates of convergence are the same order as that of
$\mathcal{B}_n(f)$ and $\mathcal{T}_n(f)$, which explain the
convergence behavior of $\mathcal{P}_n(f)$ for these two test
functions observed in Figure \ref{fig:ExamII}.
\end{remark}

\begin{remark}\label{rk:Subint}
In Figure \ref{fig:Pointwise} we plot the pointwise error of
$\mathcal{P}_n(f)$ for the function $f(x)=(x-\frac{1}{2})_{+}$. It
is clear to see that the maximum error of $\mathcal{P}_n(f)$, i.e.,
$\|f-\mathcal{P}_n(f) \|_{\infty}$, is achieved at the singularity
of $f$. Moreover, we also observe that the accuracy of
$\mathcal{P}_n(f)$ is much more accurate than $\mathcal{B}_n(f)$
except at the very small neighborhood of the singularity. A similar
phenomenon for Chebyshev interpolants has been observed in
\cite[Chapter~16]{trefethen2013atap}.
\end{remark}

\begin{figure}[ht]
\centering
\includegraphics[width=6.2cm,height=5.cm]{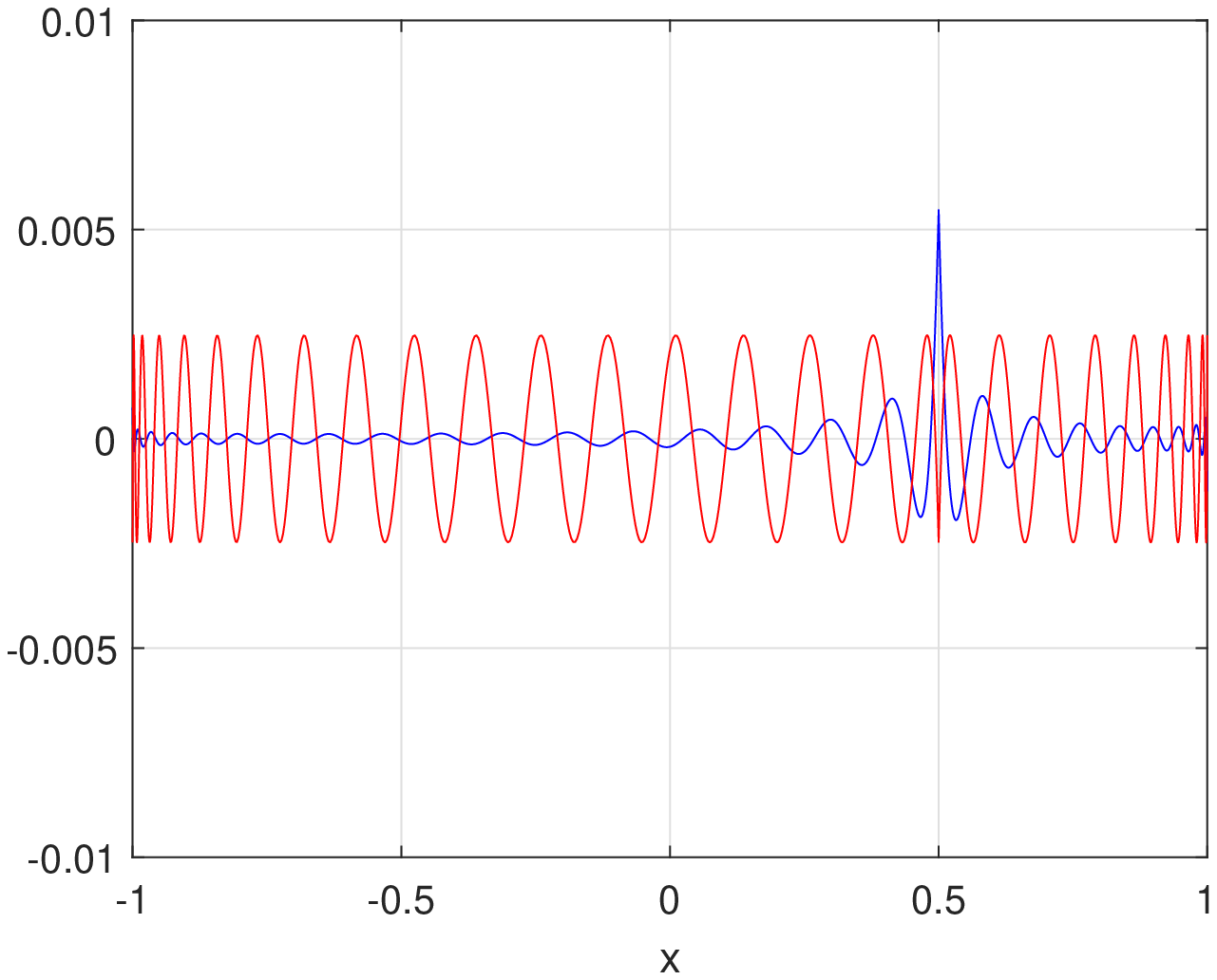}~
\includegraphics[width=6.2cm,height=5.cm]{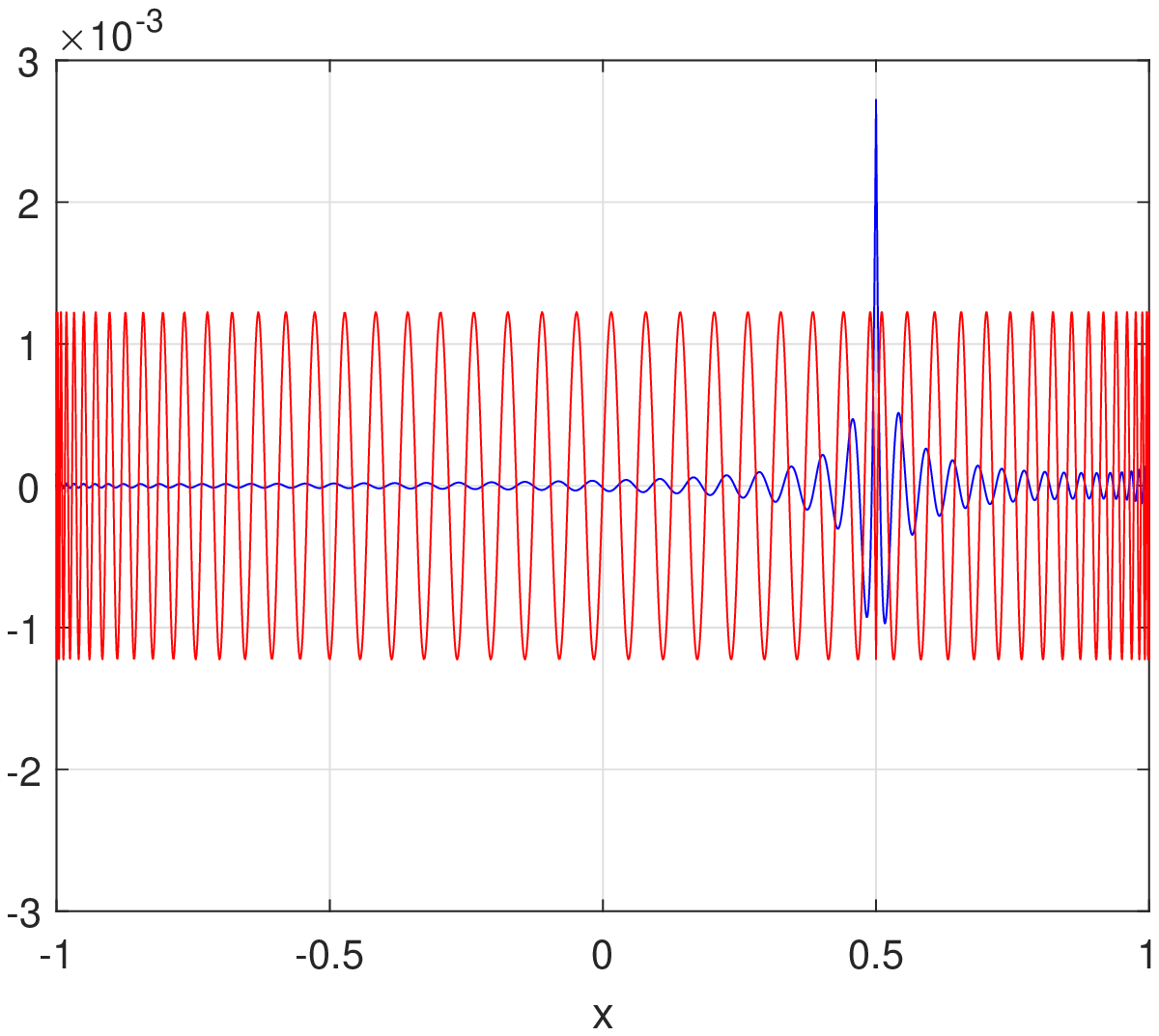}
\caption{Pointwise error of $\mathcal{P}_n(f)$ (blue) and
$\mathcal{B}_n(f)$ (red) for $n=50$ (left) and $n=100$ (right). Here
we choose $f(x)=(x-\frac{1}{2})_{+}$. } \label{fig:Pointwise}
\end{figure}

\subsection{Differentiable functions with derivatives of bounded variation}
In this section we consider the case of differentiable functions
with derivatives of bounded variation. Specifically, let $m\geq1$ be
an integer and introduce the function space
\begin{align}\label{def:FunSpace}
H_m = \left\{f~|~ f,f',\ldots,f^{(m-1)}\in \mathrm{AC(\Omega)}, ~~
f^{(m)}\in \mathrm{BV(\Omega)}  \right\},
\end{align}
where $\mathrm{AC(\Omega)}$ and $\mathrm{BV(\Omega)}$ denote the
space of absolutely continuous functions and the space of bounded
variation functions on $\mathrm{\Omega}$, respectively. This space
is preferable when developing error estimates for various orthogonal
polynomial approximations to differentiable function (see, e.g.,
\cite{liu2019optimal,trefethen2013atap,wang2018legendre,xiang2018jacobi}).
For each $f\in\mathrm{PA}(\mathrm{\Omega},\vec{\xi})\cap
C^{m-1}(\mathrm{\Omega})$, it is easy to see that the restriction of
$f^{(m+1)}$ on each
$[-1,\xi_1]$,$[\xi_1,\xi_2]$,$\ldots$,$[\xi_{\ell},1]$ is continuous
and bounded, and therefore the total variation of $f^{(m)}$ on
$\mathrm{\Omega}$ is finite. Hence, we can deduce that
$\mathrm{PA}(\mathrm{\Omega},\vec{\xi})\cap
C^{m-1}(\mathrm{\Omega})$ is a subset of $H_m$. In the following we
will extend our analysis to the function space $H_m$.

Since $f=\mathcal{P}_n(f)$ for $f\in\mathcal{P}_n$, using the Peano
kernel theorem \cite[Section~4.2]{brass2011quad} we obtain
\begin{align}\label{eq:ErrExp}
f(x) - \mathcal{P}_n(f) = \int_{-1}^{1} f^{(m)}(t) K_{m}(x,t)
\mathrm{d}t,
\end{align}
where $K_{m}(x,t)$ is the Peano kernel defined by
\begin{align}\label{eq:PeanoKernel}
K_{m}(x,t) = \frac{(x-t)_{+}^{m-1} -
\mathcal{P}_n((x-t)_{+}^{m-1})}{(m-1)!},
\end{align}
and
\begin{align}\label{eq:spline}
(x)_{+}^{r} = \left\{
            \begin{array}{ll}
0, & \hbox{$x\leq 0$,}   \\[8pt]
x^r, & \hbox{$x>0$.}
            \end{array}
            \right.
\end{align}

We now state some properties of the Peano kernel.
\begin{lemma}\label{lem:PeanoKernel}
Let $K_m(x,t)$ be the Peano kernel defined in
\eqref{eq:PeanoKernel}. Then for $x\in\mathrm{\Omega}$ and $n\geq
m-1$ we have
\begin{enumerate}
\item[(1)] For $m\geq2$, then $K_m(x,\pm1) = 0$. When $m=1$, then $K_1(x,1) = 0$.

\item[(2)] For each $m\geq2$, then $
\frac{\mathrm{d}}{\mathrm{d}t} K_{m}(x,t) = - K_{m-1}(x,t).$

\item[(3)] For $n\geq m$, we have for any $q\in \mathcal{P}_{n-m}$
that $\int_{-1}^{1} q(t) K_m(x,t) \mathrm{d}t = 0$.

\item[(4)] For $x,t\in[-1,1]$ and $m\geq2$, we have $
\| K_m(x,t) \|_{\infty} \leq K n^{-m+1}$.
\end{enumerate}
\end{lemma}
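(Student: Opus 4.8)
The plan is to handle items (1)--(3) as essentially structural facts about the kernel \eqref{eq:PeanoKernel}, and to obtain the substantive bound (4) by recognizing $x\mapsto(x-t)_+^{m-1}$ as a piecewise analytic function and invoking Theorem~\ref{thm:LegPieceRate}. For (1): at $t=1$ the function $x\mapsto(x-1)_+^{m-1}$ is identically zero on $\mathrm{\Omega}$ (since $x\le1$), so both terms of \eqref{eq:PeanoKernel} vanish, covering $m\ge2$ and $m=1$ at once; at $t=-1$ with $m\ge2$, $x\mapsto(x+1)_+^{m-1}=(x+1)^{m-1}\in\mathcal{P}_{m-1}\subseteq\mathcal{P}_n$ (this is where $n\ge m-1$ is used), hence is reproduced by $\mathcal{P}_n$ and $K_m(x,-1)=0$. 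For (2) I would differentiate \eqref{eq:PeanoKernel} in $t$, using $\partial_t(x-t)_+^{m-1}=-(m-1)(x-t)_+^{m-2}$ together with differentiation under the integral sign in the Dirichlet-kernel form \eqref{eq:LegPn} (legitimate because $D_n(x,\cdot)$ is bounded), which gives $\partial_t\mathcal{P}_n\big((x-t)_+^{m-1}\big)=-(m-1)\mathcal{P}_n\big((x-t)_+^{m-2}\big)$; dividing by $m-1$ yields $\partial_t K_m(x,t)=-K_{m-1}(x,t)$.

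For (3), given $q\in\mathcal{P}_{n-m}$ I would set $h(x)=\int_{-1}^1 q(t)(x-t)_+^{m-1}\,\mathrm{d}t$ and note, by differentiating $m$ times under the integral, that $h^{(m)}=(m-1)!\,q$, so $h\in\mathcal{P}_n$ and hence $\mathcal{P}_n(h)=h$. Interchanging the $t$- and $y$-integrations then gives $\int_{-1}^1 q(t)\,\mathcal{P}_n\big((x-t)_+^{m-1}\big)\,\mathrm{d}t=\int_{-1}^1 h(y)\,D_n(x,y)\,\mathrm{d}y=\mathcal{P}_n(h)(x)=h(x)=\int_{-1}^1 q(t)(x-t)_+^{m-1}\,\mathrm{d}t$, and subtracting the two expressions yields $\int_{-1}^1 q(t)K_m(x,t)\,\mathrm{d}t=0$. (Equivalently, this is the Peano kernel theorem applied to $E_x:f\mapsto f(x)-\mathcal{P}_n(f)(x)$, which annihilates $\mathcal{P}_n\supseteq\mathcal{P}_{m-1}$ when $n\ge m-1$.)

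For (4), fix $t\in(-1,1)$ and put $g_t(x)=(x-t)_+^{m-1}$. Since $m\ge2$ we have $g_t\in C^{m-2}(\mathrm{\Omega})$, and $g_t$ is polynomial on each of $[-1,t]$ and $[t,1]$, so $g_t\in C^{m-2}(\mathrm{\Omega})\cap\mathrm{PA}(\mathrm{\Omega},(t))$ with a single singular point. Applying Theorem~\ref{thm:LegPieceRate} with the integer there taken to be $m-1$ gives $\|g_t-\mathcal{P}_n(g_t)\|_\infty\le K\,n^{-(m-1)}$, and since $K_m(x,t)=\big(g_t(x)-\mathcal{P}_n(g_t)(x)\big)/(m-1)!$ this is exactly $\|K_m(\cdot,t)\|_\infty\le K\,n^{-m+1}$. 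The hard part is that the constant $K$ must be uniform in $t\in[-1,1]$, whereas the crude termwise bound on the Legendre coefficients of $g_t$ carries a factor $(1-t^2)^{-1/4}$ that blows up at the endpoints, so the endpoint regime genuinely requires the Dirichlet-kernel localization of Theorem~\ref{thm:LegPieceRate} with its constants tracked uniformly. I would argue this in two steps: for $t$ in a fixed compact subinterval $[-1+\eta,1-\eta]$ the Saff--Totik constants $C,c$ in \eqref{eq:SaffBound} for the one-parameter family $\{g_t\}$, and the auxiliary radius $\epsilon$ in the proof of Theorem~\ref{thm:LegPieceRate}, can be chosen independently of $t$; for $t\in[1-\eta,1]$ (and $t=1$), $g_t$ is supported on $(t,1]$ with $\|g_t\|_\infty\le(1-t)^{m-1}$, while for $t\in[-1,-1+\eta]$ (and $t=-1$) subtracting the polynomial $(x-t)^{m-1}\in\mathcal{P}_n$ gives $g_t-\mathcal{P}_n(g_t)=\pm\big((t-x)_+^{m-1}-\mathcal{P}_n((t-x)_+^{m-1})\big)$ with $(t-x)_+^{m-1}$ supported near $-1$, so in both boundary ranges one combines the smallness of the remainder with the kernel estimates of Lemma~\ref{prop:Hn} and, where convenient, the recursion $K_m(x,t)=\int_t^1 K_{m-1}(x,s)\,\mathrm{d}s$ from (1)--(2). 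Closing this endpoint analysis with a genuinely $t$-uniform constant is the one place I expect real work; items (1)--(3) are routine.
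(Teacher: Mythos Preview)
Your approach matches the paper's on all four items: (1)--(3) are handled by the same direct computations (for (3) the paper uses exactly your parenthetical version, plugging a polynomial into \eqref{eq:ErrExp} and reading off that $\int_{-1}^{1} q^{(m)}(t)K_m(x,t)\,\mathrm{d}t=0$ for every $q\in\mathcal{P}_n$), and (4) is obtained precisely as you propose, by recognizing $g_t(x)=(x-t)_+^{m-1}\in C^{m-2}(\mathrm{\Omega})\cap\mathrm{PA}(\mathrm{\Omega},(t))$ and invoking Theorem~\ref{thm:LegPieceRate}. The paper's proof of (4) is a single sentence and does not address the uniformity-in-$t$ issue you flag; your endpoint discussion is therefore more careful than what the paper itself provides.
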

\begin{proof}
For the first assertion, notice that $(x-1)_{+}^{m-1}=0$ and
$(x+1)_{+}^{m-1}=(x+1)^{m-1}$ when $m\geq2$. Therefore,
$K_m(x,\pm1)=0$. When $m=1$, notice that $(x-1)_{+}^{0}=0$, the
desired result follows. For the second assertion, differentiating
the Peano kernel with respect to $t$ yields
\begin{align}
\frac{\mathrm{d}}{\mathrm{d}t} K_{m}(x,t) = - \frac{(x-t)_{+}^{m-2}
- \mathcal{P}_n((x-t)_{+}^{m-2})}{(m-2)!} = - K_{m-1}(x,t).
\nonumber
\end{align}
This proves the second assertion. For the third assertion, we notice
that $f\equiv\mathcal{P}_n(f)$ whenever $f\in\mathcal{P}_n$. Setting
$f=q\in \mathcal{P}_n$ in \eqref{eq:ErrExp} gives
\begin{align}
q - \mathcal{P}_n(q) &= \int_{-1}^{1} q^{(m)}(t) K_m(x,t)
\mathrm{d}t = 0.  \nonumber
\end{align}
Since $q\in \mathcal{P}_n$ is arbitrary, this proves the third
assertion. For the last assertion, we note that $(x-t)_{+}^{m-1}$ is
a piecewise analytic function and $(x-t)_{+}^{m-1}\in
C^{m-2}[-1,1]$. The desired result follows from Theorem
\ref{thm:LegPieceRate}. This ends the proof.
\end{proof}

We are now ready to state the second main result of this section.
\begin{theorem}\label{thm:LegDiffRate}
Assume that $f\in H_m$ for some integer $m\geq1$. Then, we have
\begin{align}\label{eq:LegDiffRate}
\|f - \mathcal{P}_n(f) \|_{\infty} \leq K n^{-m}.
\end{align}
\end{theorem}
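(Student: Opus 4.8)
The plan is to bound the error $f-\mathcal P_n(f)$ using the Peano kernel representation \eqref{eq:ErrExp} together with the decay estimate for the Peano kernel established in Lemma \ref{lem:PeanoKernel}(4). The starting point is the identity
\[
f(x) - \mathcal P_n(f) = \int_{-1}^{1} f^{(m)}(t)\, K_m(x,t)\, \mathrm{d}t ,
\]
valid for $f\in H_m$ since $f,\ldots,f^{(m-1)}$ are absolutely continuous. The natural temptation is to take absolute values and pull out $\|K_m(x,\cdot)\|_\infty \le K n^{-m+1}$, but this only yields $O(n^{-m+1})$, which is off by one power of $n$. To recover the sharp rate one must use the bounded variation of $f^{(m)}$, not just its integrability, together with the fact (Lemma \ref{lem:PeanoKernel}(1)) that $K_m(x,\pm 1)=0$ when $m\ge 2$.

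The key step, for $m\ge 2$, is to integrate by parts in the $t$-variable using that $f^{(m)}\in\mathrm{BV}(\Omega)$. Writing the integral as a Riemann--Stieltjes integral and transferring the derivative onto the kernel,
\[
\int_{-1}^{1} f^{(m)}(t)\, K_m(x,t)\, \mathrm{d}t
= \Big[ f^{(m)}(t)\, \widetilde K_m(x,t)\Big]_{t=-1}^{t=1}
- \int_{-1}^{1} \widetilde K_m(x,t)\, \mathrm{d}f^{(m)}(t),
\]
where $\widetilde K_m(x,t) = \int_{t}^{1} K_m(x,s)\,\mathrm{d}s$ is an antiderivative of $-K_m$ chosen so that $\widetilde K_m(x,1)=0$. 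By Lemma \ref{lem:PeanoKernel}(2)--(3) one recognizes that $\widetilde K_m$ is, up to sign, essentially $K_{m+1}$ (the Peano kernel of one higher order), which by Lemma \ref{lem:PeanoKernel}(4) satisfies $\|\widetilde K_m(x,\cdot)\|_\infty \le K n^{-m}$. Indeed $(x-t)_+^{m}/m$ is a piecewise analytic function in $C^{m-1}[-1,1]$, so Theorem \ref{thm:LegPieceRate} gives the required $n^{-m}$ bound directly. The boundary term vanishes because $\widetilde K_m(x,1)=0$ and, after adjusting the constant of integration appropriately or using $K_m(x,-1)=0$ for $m\ge 2$, the contribution at $t=-1$ is controlled in the same way. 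We then estimate
\[
\|f - \mathcal P_n(f)\|_\infty \le \Big(\sup_{x}\|\widetilde K_m(x,\cdot)\|_\infty\Big)\, V_m \le K\, V_m\, n^{-m},
\]
where $V_m$ is the total variation of $f^{(m)}$ on $\Omega$. For the base case $m=1$, one argues directly: $f'\in\mathrm{BV}(\Omega)$, and since $(x-t)_+^{0}$ is piecewise analytic in $C^{-1}$... more carefully, one writes $f(x)-\mathcal P_n(f) = \int_{-1}^1 f'(t)K_1(x,t)\,\mathrm dt$ and integrates by parts using $K_1(x,1)=0$ (Lemma \ref{lem:PeanoKernel}(1)) to transfer onto $\widetilde K_1$, which equals (up to sign) the Peano kernel $K_2$ for the piecewise-analytic $C^0$ function $(x-t)_+$, hence is $O(n^{-1})$ by Theorem \ref{thm:LegPieceRate}.

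The main obstacle I anticipate is the boundary-term bookkeeping: the antiderivative $\widetilde K_m$ is only determined up to an additive function of $x$, and one must choose that constant so that \emph{both} endpoint contributions are handled — at $t=1$ by $\widetilde K_m(x,1)=0$, and at $t=-1$ by combining $K_m(x,-1)=0$ with the Stieltjes integration-by-parts formula. One must also confirm that $\widetilde K_m(x,t)$ genuinely coincides with the Peano kernel of order $m+1$ for the shifted power truncation (so that Theorem \ref{thm:LegPieceRate} applies with the right smoothness class $C^{m-1}$ and singularity structure), rather than merely resembling it; this requires checking that $\mathcal P_n$ commutes appropriately with the antidifferentiation in $t$ — precisely the content of Lemma \ref{lem:PeanoKernel}(2). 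Once these identifications are in place, the estimate is a one-line application of Lemma \ref{lem:PeanoKernel}(4) and the finiteness of $V_m$.
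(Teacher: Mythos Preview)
Your proposal is correct and follows essentially the same route as the paper. The paper streamlines your argument by invoking Lemma~\ref{lem:PeanoKernel}(2) directly to write $K_m=-\tfrac{\mathrm{d}}{\mathrm{d}t}K_{m+1}$ and then integrating by parts; since $m+1\ge 2$, Lemma~\ref{lem:PeanoKernel}(1) gives $K_{m+1}(x,\pm1)=0$ so \emph{both} boundary terms vanish at once and no separate treatment of $m=1$ or choice of integration constant is needed --- your $\widetilde K_m$ is exactly $K_{m+1}$, and the bookkeeping you worry about dissolves.
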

\begin{proof}
Applying the second assertion of Lemma \ref{lem:PeanoKernel} and
integrating by parts, we obtain
\begin{align}
f(x) - \mathcal{P}_n(f) &= - \int_{-1}^{1} f^{(m)}(t)
\frac{\mathrm{d}}{\mathrm{d}t} K_{m+1}(x,t) \mathrm{d}t
\nonumber \\
&= - \left[ f^{(m)}(t) K_{m+1}(x,t)\big|_{-1}^{1} - \int_{-1}^{1}
K_{m+1}(x,t) \mathrm{d} f^{(m)}(t) \right]  \nonumber \\
&= \int_{-1}^{1} K_{m+1}(x,t) \mathrm{d}f^{(m)}(t), \nonumber
\end{align}
where the last integral is understood as a Riemann-Stieltjes
integral and we have used the first assertion of Lemma
\ref{lem:PeanoKernel} in the last step. Furthermore, using the
inequality of Riemann-Stieltjes integral, we arrive at
\begin{align}
\|f(x) - \mathcal{P}_n(f)\|_{\infty} &\leq \|K_{m+1}(x,t)\|_{\infty}
V(f^{(m)}).  \nonumber
\end{align}
where $V(f^{(m)})$ is the total variation of $f^{(m)}$. The desired
result follows from the last assertion of Lemma
\ref{lem:PeanoKernel}.
\end{proof}

\begin{remark}
For the test function $f(x)=\exp(-1/x^2)$, it is infinitely
differentiable on $\mathrm{\Omega}$ and $f^{(m)}\in
\mathrm{BV(\Omega)}$ for every $m\in \mathbb{N}$. Thus, we can
deduce from Theorem \ref{thm:LegDiffRate} that the rate of
convergence of $\mathcal{P}_n(f)$ is $O(n^{-m})$ for any $m\in
\mathbb{N}$. Moreover, for the other two test functions
$f(x)=(x-\frac{1}{2})_{+}^3,|\sin(5x)|$, they can also be viewed as
differentiable functions with derivatives of bounded variation and
they correspond to $m=3$ and $m=1$, respectively. Therefore, we can
deduce from Theorem \ref{thm:LegDiffRate} that the rate of
convergence of $\mathcal{P}_n(f)$ is $O(n^{-3})$ and $O(n^{-1})$,
respectively. Clearly, these results explain why the rate of
convergence of $\mathcal{P}_n(f)$ is the same as that of
$\mathcal{B}_n(f)$ observed in Figure \ref{fig:ExamII}.
\end{remark}

\section{Extension}\label{sec:extension}
In this section we extend our discussion to functions of fractional
smoothness. We shall restrict our attention to some model functions
for the sake of brevity and their results will shed light on the
investigation of more complicated functions.

\subsection{Functions with an interior singularity of fractional order}
Consider the function $f(x)=|x-x_0|^{\alpha}$, where $x_0\in(-1,1)$
and $\alpha>0$ is not an integer. Clearly, this function has an
interior singularity of fractional order. To derive the optimal rate
of convergence of $\mathcal{P}_n(f)$, we shall combine the
asymptotic estimate of the Legendre coefficients of $f$ and the
observation in Remark \ref{rk:Subint}.

Using \cite[Equation~(7.232.3)]{gradshteyn2007table}, we see that
\begin{align}\label{eq:LegInterSing}
a_k &= \left( k + \frac{1}{2} \right) \int_{-1}^{1} |x_0-x|^{\alpha}
P_k(x) \mathrm{d}x \nonumber \\
&= \left( k + \frac{1}{2} \right) \left[ \int_{-1}^{x_0}
(x_0-x)^{\alpha} P_k(x) \mathrm{d}x + \int_{x_0}^{1}
(x-x_0)^{\alpha} P_k(x) \mathrm{d}x \right] \nonumber \\
&= \left( k + \frac{1}{2} \right)
\frac{\Gamma(\alpha+1)\Gamma(k+1)}{\Gamma(k+\alpha+2)} \left[
(1-x_0)^{\alpha+1} P_k^{(\alpha+1,-\alpha-1)}(x_0)  \right.
\nonumber \\
&~~~~~~~~~~~~~~~ \left. + (-1)^k (1+x_0)^{\alpha+1}
P_k^{(\alpha+1,-\alpha-1)}(-x_0) \right],
\end{align}
where $P_k^{(\alpha+1,-\alpha-1)}(x)$ is the Jacobi polynomial of
degree $k$. From \cite[Theorem~8.21.8]{szego1975orthogonal} we know
that $P_k^{(\alpha,\beta)}(x)=O(k^{-1/2})$ where $x\in(-1,1)$ and
$\alpha,\beta$ are arbitrary real numbers. Combining this result
with the asymptotic behavior of the ratio of gamma functions
\cite[Equation~(5.11.12)]{olver2010nist}, we obtain the estimate
$a_k = O(k^{-\alpha-1/2})$. On the other hand, as observed in Remark
\ref{rk:Subint}, the maximum error of $\mathcal{P}_n(f)$ is achieved
in a small neighborhood of the singularity $x=x_0$. Using the
Laplace-Heine formula of the Legendre polynomials
\cite[Theorem~8.21.1]{szego1975orthogonal}, i.e.,
$P_k(x)=O(k^{-1/2})$ where $x\in(-1,1)$, we see at once that
\begin{align}
\|f - \mathcal{P}_n(f) \|_{\infty} \leq \sum_{k=n+1}^{\infty} |a_k|
|P_k(x)| = \sum_{k=n+1}^{\infty} O(k^{-\alpha-1}) = O(n^{-\alpha}).
\end{align}
Moreover, this rate of convergence is optimal in the sense that it
is the same as that of $\mathcal{B}_n(f)$ up to constant factors
(see, e.g., \cite[p.~410]{timan1963approximation}). Regarding
$\mathcal{T}_n(f)$, it has been shown in
\cite[Equation~(4.61)]{liu2019optimal} that the optimal rate of
convergence of $\mathcal{T}_n(f)$ is also $O(n^{-\alpha})$. Thus,
$\mathcal{T}_n(f)$, $\mathcal{B}_n(f)$ and $\mathcal{P}_n(f)$ have
the same rate of convergence for functions with an interior
singularity of fractional order.

\begin{figure}[ht]
\centering
\includegraphics[trim = 36mm 0mm 0mm 0mm,width=13cm,height=9cm]{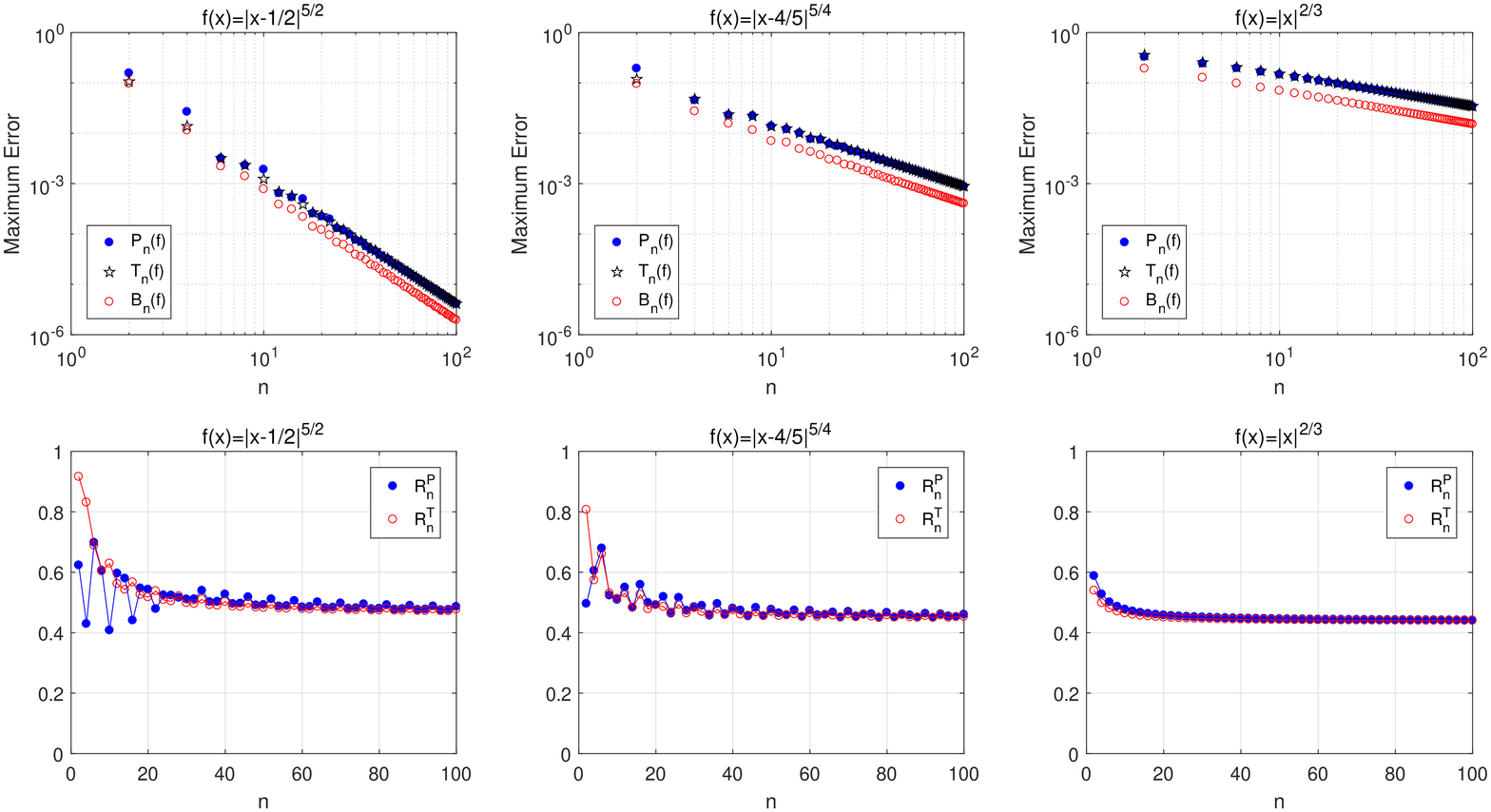}
\caption{Top row shows the log plot of the maximum error of
$\mathcal{B}_n(f)$, $\mathcal{T}_n(f)$ and $\mathcal{P}_n(f)$ for
$f(x)=|x-\frac{1}{2}|^{5/2}$ (left), $f(x)=|x-\frac{4}{5}|^{5/4}$
(middle) and $f(x)=|x|^{2/3}$ (right). Bottom row shows the
corresponding $\mathcal{R}_n^P$ and $\mathcal{R}_n^T$. Here $n$
ranges from 2 to 100. } \label{fig:ExamIV}
\end{figure}

In Figure \ref{fig:ExamIV} we show the maximum error of three
methods as a function of $n$ for the three functions
$f(x)=|x-\frac{1}{2}|^{5/2},|x-\frac{4}{5}|^{5/4},|x|^{2/3}$ and the
corresponding ratios $\mathcal{R}_n^P$ and $\mathcal{R}_n^T$. From
the top row of Figure \ref{fig:ExamIV} we see that all three methods
$\mathcal{B}_n(f)$, $\mathcal{T}_n(f)$ and $\mathcal{P}_n(f)$ indeed
converge at the same rate. Moreover, the accuracy of
$\mathcal{T}_n(f)$ and $\mathcal{P}_n(f)$ is indistinguishable. From
the bottom row of Figure \ref{fig:ExamIV} we see that each ratio
$\mathcal{R}_n^P$ and $\mathcal{R}_n^T$ approaches a constant value
as $n\rightarrow\infty$, which confirms that $\mathcal{B}_n(f)$ is
better than $\mathcal{T}_n(f)$ and $\mathcal{P}_n(f)$ by only some
constant factors (for the three test functions,
$\mathcal{R}_n^P,\mathcal{R}_n^T\in[0.44,0.49]$ as
$n\rightarrow\infty$ and thus $\mathcal{B}_n(f)$ is better than
$\mathcal{T}_n(f)$ and $\mathcal{P}_n(f)$ by a factor of up to 2.3).

\subsection{Functions with endpoint singularities}
Consider the functions $f_{\alpha}(x)=(1\pm x)^{\alpha}$, where
$\alpha>0$ is not an integer. From \cite[Equation
(7.311.3)]{gradshteyn2007table} and setting $\lambda=1/2$, closed
forms of the Legendre coefficients are given by
\begin{align}\label{eq:EndS1}
a_k = (\pm1)^k \frac{2^{\alpha} \Gamma(\alpha+1)^2
(2k+1)}{\Gamma(\alpha+1-k)\Gamma(\alpha+2+k)}, \quad  k\geq0.
\end{align}
Furthermore, combining the reflection
formula \cite[Equation~(5.5.3)]{olver2010nist} and the asymptotic
behavior of the ratio of gamma functions
\cite[Equation~(5.11.12)]{olver2010nist}, we can deduce that
\begin{align}
a_k = (-1)(\mp1)^k \frac{2^{\alpha} \sin(\alpha\pi)
\Gamma(\alpha+1)^2 (2k+1) \Gamma(k-\alpha)}{\pi \Gamma(k+\alpha+2)}
= O(k^{-2\alpha-1}). \nonumber
\end{align}
An important observation is that the sequence $\{a_k\}_{k>\alpha}$
has the same constant sign when $f_{\alpha}(x)=(1-x)^{\alpha}$ and
has alternating signs when $f_{\alpha}(x)=(1+x)^{\alpha}$. Recall
$P_k(\pm1)=(\pm1)^k$, we can deduce that the maximum error of
$\mathcal{P}_n(f_{\alpha})$ is taken at $x=1$ for
$f_{\alpha}(x)=(1-x)^{\alpha}$ and at $x=-1$ for
$f_{\alpha}(x)=(1+x)^{\alpha}$. Therefore, we obtain for $n\geq
\lfloor \alpha \rfloor$ that
\begin{align}\label{eq:EndS2}
\|f_{\alpha} - \mathcal{P}_n(f_{\alpha}) \|_{\infty} =
\sum_{k=n+1}^{\infty} |a_k| = O(n^{-2\alpha}).
\end{align}
We remark that this result is optimal since the rate of convergence
of $\mathcal{B}_n(f_{\alpha})$ is $O(n^{-2\alpha})$ (see, e.g.,
\cite[p.~411]{timan1963approximation}). Moreover, from
\cite{liu2019optimal} we know that the rate of convergence of
$\mathcal{T}_n(f)$ is also $O(n^{-2\alpha})$. Thus, these three
approaches $\mathcal{B}_n(f_{\alpha})$, $\mathcal{P}_n(f_{\alpha})$
and $\mathcal{T}_n(f_{\alpha})$ converge at the same rate.

In Figure \ref{fig:ExamIII} we show the maximum error of
$\mathcal{B}_n(f)$, $\mathcal{T}_n(f)$ and $\mathcal{P}_n(f)$ as a
function of $n$ for the three functions
$f(x)=(1+x)^{5/2},(1-x^2)^{3/2},\cos^{-1}(x)$ and the corresponding
ratios $\mathcal{R}_n^P$ and $\mathcal{R}_n^T$. From the top row of
Figure \ref{fig:ExamIII} we see that all three methods indeed
converge at the same rate. From the bottom row of Figure
\ref{fig:ExamIII} we see that each ratio $\mathcal{R}_n^P$ and
$\mathcal{R}_n^T$ converges to a finite asymptote as
$n\rightarrow\infty$, which means that $\mathcal{B}_n(f)$ is better
than $\mathcal{T}_n(f)$ and $\mathcal{P}_n(f)$ by only some constant
factors (for these three test functions,
$\mathcal{R}_n^P\in[0.17,0.29]$ and $\mathcal{R}_n^T\in[0.44,0.49]$
as $n\rightarrow\infty$ and thus $\mathcal{B}_n(f)$ is better than
$\mathcal{P}_n(f)$ by at most a factor of $5.9$ and is better than
$\mathcal{T}_n(f)$ by at most a factor of 2.3).

\begin{figure}[ht]
\centering
\includegraphics[trim = 36mm 0mm 0mm 0mm,width=13cm,height=9cm]{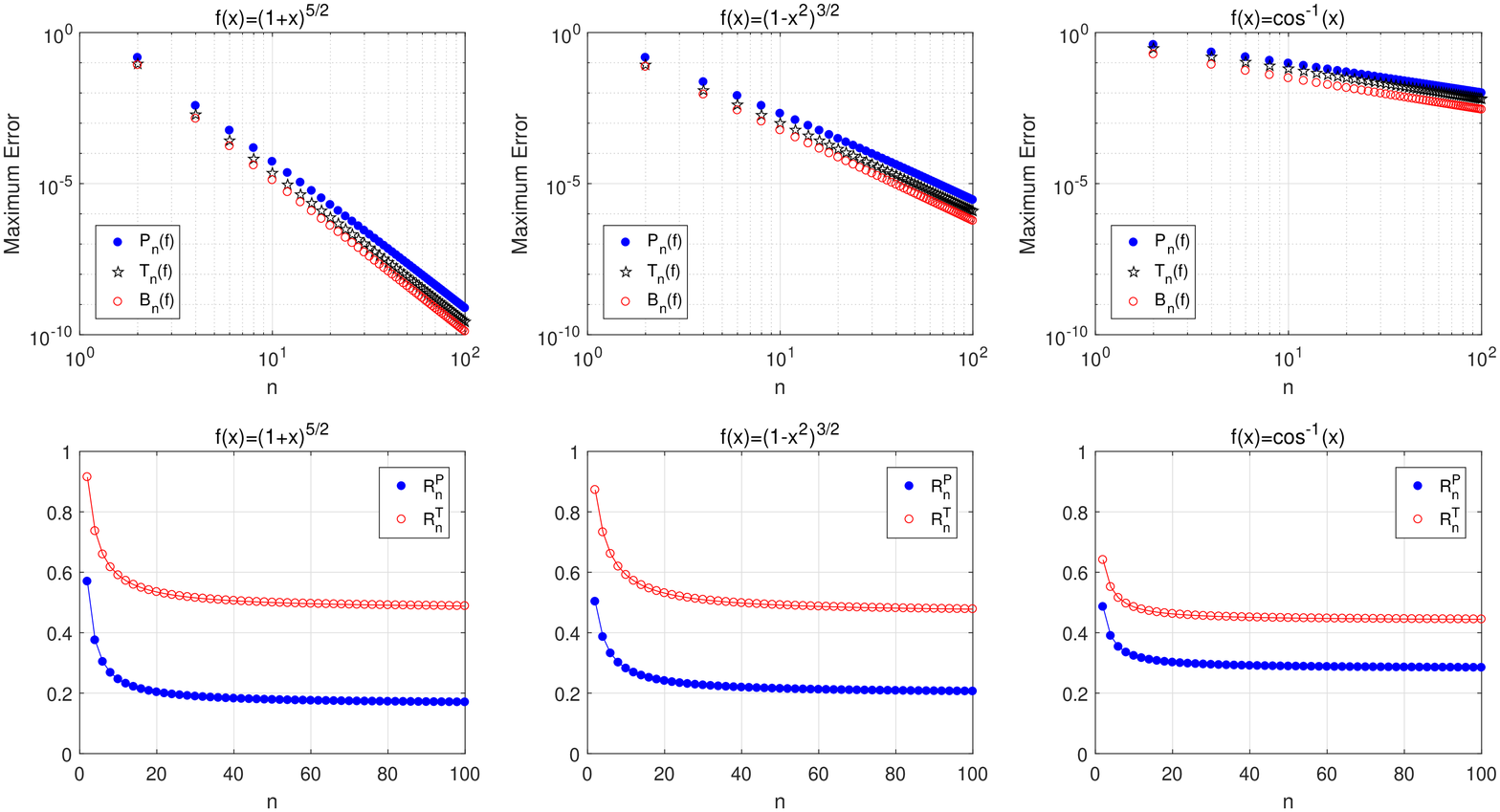}
\caption{Top row shows the log plot of the maximum error of
$\mathcal{B}_n(f)$, $\mathcal{T}_n(f)$ and $\mathcal{P}_n(f)$ for
$f(x)=(1+x)^{5/2}$ (left), $f(x)=(1-x^2)^{3/2}$ (middle) and
$f(x)=\cos^{-1}(x)$ (right). Bottom row shows the corresponding
$\mathcal{R}_n^P$ and $\mathcal{R}_n^T$. Here $n$ ranges from 2 to
100. } \label{fig:ExamIII}
\end{figure}

\begin{remark}
For $f_{\alpha}(x)$, it has been shown in
\cite[Theorem~5.10]{wang2016gegenbauer} that
\begin{align}\label{eq:LegCheb}
\frac{a_k}{c_k} =
\frac{\Gamma(\alpha+1)}{\Gamma(\alpha+\frac{1}{2})} \pi^{1/2} +
O(k^{-1}).
\end{align}
It is easy to verify that the first term on the right hand side is
always greater than one for $\alpha>0$ and is strictly increasing as
$\alpha$ grows. Moreover, similar to the Legendre case, we can show
that the maximum error of $\mathcal{T}_n(f)$ is also achieved at
$x=\mp1$ for $f_{\alpha}(x)=(1\pm x)^{\alpha}$, i.e.,
$\|f_{\alpha}-\mathcal{T}_n(f_{\alpha}) \|_{\infty} =
\sum_{k=n+1}^{\infty}|c_k|$. Combining this with \eqref{eq:EndS2}
and \eqref{eq:LegCheb}, we can deduce that
$\mathcal{T}_n(f_{\alpha})$ is better than
$\mathcal{P}_n(f_{\alpha})$ by a constant factor of
$\frac{\Gamma(\alpha+1)}{\Gamma(\alpha+\frac{1}{2})} \pi^{1/2}$ as
$n\rightarrow\infty$. This means that the larger $\alpha$, the
better the accuracy of $\mathcal{T}_n(f_{\alpha})$ than
$\mathcal{P}_n(f_{\alpha})$, and this phenomenon can be seen clearly
from the bottom row of Figure \ref{fig:ExamIII}.
\end{remark}

\section{Concluding remarks}\label{sec:conclusion}
In this paper we have studied the optimal rate of convergence of
Legendre projections $\mathcal{P}_n(f)$ in the $L^{\infty}$ norm for
analytic and differentiable functions. For analytic functions, we
showed that the optimal rate of convergence of $\mathcal{B}_n(f)$ is
faster than that of $\mathcal{P}_n(f)$ by a factor of $n^{1/2}$. For
differentiable functions such as piecewise analytic functions and
functions of fractional smoothness, however, we improved the
existing results and showed that the rate of convergence of
$\mathcal{B}_n(f)$ is better than that of $\mathcal{P}_n(f)$ by only
some constant factors (the factor is between $2$ to $6$ for most of
examples displayed in this paper). Our results provide new insights
into the approximation power of $\mathcal{P}_n(f)$.

Finally, we present some problems for future research:
\begin{itemize}
\item In Figure \ref{fig:Pointwise}, we have illustrated the pointwise error of
$\mathcal{P}_n(f)$. It can be seen that $\mathcal{P}_n(f)$ converges
actually much faster than $\mathcal{B}_n(f)$ when $x$ is far from
the singularity of $f$. It would be interesting to establish a
precise estimate on the rate of pointwise convergence of
$\mathcal{P}_n(f)$ to explain this observation.

\item Gegenbauer and Jacobi projections are widely used in
spectral methods for differential and integral equations and their
optimal error estimates are often required in these applications.
Our work can be extended to these two cases (see
\cite{wang2020gegenbauer} for the case of Gegenbauer projections).
Following the same line of Theorem \ref{thm:LegPieceRate}, it is
possible to establish an optimal error estimate of Jacobi
projections for piecewise analytic functions by combining the result
\cite[Theorem~3]{saff1989poly} and some sharp estimates of the
Dirichlet kernel of Jacobi polynomials. Moreover, for functions of
fractional smoothness, it is also possible to establish some optimal
error estimates of Jacobi projections by combining the observation
in Remark \ref{rk:Subint} and sharp estimates of Jacobi expansion
coefficients (see \cite{xiang2018jacobi}).

\item Spectral interpolation, i.e., polynomial interpolation in roots or extrema of
Legendre, and, more generally, Gegenbauer and Jacobi polynomials, is
a powerful approach for approximating smooth functions that are
difficult to compute and serves as theoretical basis for spectral
collocation methods (see, e.g., \cite[Chapter 3]{shen2011spectral}).
It is of interest to study the comparison of the convergence
behavior of spectral interpolation and that of the best
approximation $\mathcal{B}_n(f)$ for analytic and differentiable
functions.
\end{itemize}

%

\begin{acknowledgements}
The author would like to thank two anonymous referees for their
careful reading of the manuscript and helpful comments which have
improved this paper.
\end{acknowledgements}

%
%


\begin{thebibliography}{}
%
%


\bibitem{almkvist1988ellipse}
G. Almkvist and B. Berndt, Gauss, Landen, Ramanujan, the
arithmetic-geometric mean, ellipses, $\pi$, and the ladies diary,
Amer. Math. Monthly, 95(7):585-608, 1988.


\bibitem{alpert1991legendre}
B. K. Alpert and V. Rokhlin, A fast algorithm for the evaluation of
Legendre expansions, SIAM J. Sci. Statist. Comput., 12(1):158-179,
1991.


\bibitem{Antonov1981estimate}
V. A. Antonov and K. V. Holsevnikov,
\newblock An estimate of the
remainder in the expansion of the generating function for the
Legendre polynomials (Generalization and improvement of Bernstein's
inequality), Vestnik Leningrad University Mathematics, 13, 163--166,
1981.


\bibitem{bernstein1912cheb}
S. Bernstein, Sur l'ordre de la meilleure approximation des
fonctions continues par les polyn\^{o}mes de degr\'{e} donn\'{e},
Mem. Cl. Sci. Acad. Roy. Belg. 4:1-103, 1912.


\bibitem{brass2011quad}
H. Brass and K. Petras, {\em Quadrature Theory: The Theory of
Numerical Integration on a Compact Interval}, Amer. Math. Soc.,
Providence, Rhode Island, 2011.


\bibitem{canuto2006spectral}
C. Canuto, M. Y. Hussaini, A. Quarteroni and T. A. Zang, Spectral
Methods: Fundamentals in Single Domains, Springer, 2006.


\bibitem{cheney1998approximation}
E. W. Cheney, Introduction to Approximation Theory, AMS Chelsea
Publishing, Providence, RI, 1998.


\bibitem{clenshaw1964best}
C. W. Clenshaw, A comparison of ``best" polynomial approximations
with truncated Chebyshev series expansions, SIAM Numer. Anal.,
1(1):26-37, 1964.


\bibitem{davis1984methods}
P. J. Davis and P. Rabinowitz, Methods of Numerical Integration,
Second edition, Academic Press, London, 1984.


\bibitem{driscoll2014chebfun}
T. A. Driscoll, H. Hale and L. N. Trefethen, Chebfun User's Guide,
Pafnuty Publications, Oxford, 2014.


\bibitem{eriksson1986error}
K. Eriksson, Some error estimates for the $p$-version of the finite
element method, SIAM J. Numer. Anal., 23(2): 403-411, 1986.


\bibitem{gradshteyn2007table}
I. S. Gradshteyn and I. M. Ryzhik, Table of Integrals, Series, and
Products, Seventh Edition, Academic Press, 2007.



\bibitem{gui1986fem}
W. Gui and I. Babu\v{s}ka, The $h,p$ and $h$-$p$ versions of the
finite element method in 1 dimension, Numer. Math., 49:577-612,
1986.


\bibitem{hesthaven2007spectral}
J. H. Hesthaven, S. Gottlieb and D. Gottlieb, Spectral Methods for
Time-Dependent Problems, Cambridge University Press, 2007.


\bibitem{iserles2011legendre}
A. Iserles, A fast and simple algorithm for the computation of
Legendre coefficients, Numer. Math., 117(3):529-553, 2011.


\bibitem{Jameson2014ellipse}
G. J. O. Jameson, Inequalities for the perimeter of an ellipse,
Math. Gazette, 98:227-234, 2014.


\bibitem{liu2019optimal}
W.-J. Liu, L.-L. Wang and H.-Y. Li, Optimal error estimates for
Chebyshev approximation of functions with limited regularity in
fractional Sobolev-type spaces, Math. Comp., 88(320):2857--2895,
2019.


\bibitem{mason2003chebyshev}
J. C. Mason and D. C. Handscomb, Chebyshev Polynomials, Chapman and
Hall/CRC, Boca Raton, 2003.


\bibitem{qu1988lebesgue}
C. K. Qu and R. Wong, Szego's conjecture on Lebesgue constants for
Legendre series, Pacific J. Math., 135(1):157-188, 1988.


\bibitem{olver2010nist}
F. W. J. Olver, D. W. Lozier, R. F. Boisvert and C. W. Clark, NIST
Handbook of Mathematical Functions, Cambridge University Press,
2010.



\bibitem{osipov2013pswf}
A. Osipov, V. Rokhlin and H. Xiao, Prolate Spheroidal Wave Functions
of Order Zero: Mathematical Tools for Bandlimited Approximation,
Springer, 2013.


\bibitem{rivlin1981approx}
T. J. Rivlin, An introduction to the approximation of functions,
Dover Publications, Inc. New York, 1981.



\bibitem{saff1989poly}
E. B. Saff and V. Totik, Polynomial approximation of piecewise
analytic functions, J. London Math. Soc., s2-39:487-498, 1989.


\bibitem{shen1994legendre}
J. Shen, Efficient spectral-Galerkin method I. Direct solvers of
second-and fourth-order equations using Legendre polynomials, SIAM
J. Sci. Comput., 15(6):1489-1505, 1994.


\bibitem{shen2011spectral}
J. Shen, T. Tang and L.-L. Wang, Spectral Methods: Algorithms,
Analysis and Applications, Springer, Heidelberg, 2011.


\bibitem{suetin1964legendre}
P. K. Suetin, Representation of continuous and differentiable
functions by Fourier series of Legendre polynomials, Dokl. Akad.
Nauk SSSR, 158(6):1275-1277, 1964.


\bibitem{szego1975orthogonal}
G. Szeg\H{o}, Orthogonal Polynomials, volume~23, American
Mathematical Society, 1939.


\bibitem{timan1963approximation}
A. F. Timan, Theory of Approximation of Functions of a Real
Variable, Pergamon Press, Oxford, 1963.


\bibitem{townsend2018fast}
A. Townsend, M. Webb and S. Olver, Fast polynomial transforms based
on Toeplitz and Hankel matrices, Math. Comp., 87(312):1913-1934,
2018.


\bibitem{trefethen2013atap}
L. N. Trefethen, Approximation Theory and Approximation Practice,
SIAM, 2013.


\bibitem{wang2012legendre}
H.-Y. Wang and S.-H. Xiang, On the convergence rates of Legendre
approximation, Math. Comp., 81(278):861--877, 2012.


\bibitem{wang2016gegenbauer}
H.-Y. Wang, On the optimal estimates and comparison of Gegenbauer
expansion coefficients, SIAM J. Numer. Anal., 54(3):1557-1581, 2016.


\bibitem{wang2018legendre}
H.-Y. Wang, A new and sharper bound for Legendre expansion of
differentiable functions, Appl. Math. Letters, 85:95-102, 2018.


\bibitem{wang2020gegenbauer}
H.-Y. Wang, On the optimal rates of convergence of Gegenbauer
projections, arXiv:2008.00584, 2020.


\bibitem{xiang2012error}
S.-H. Xiang, On error bounds for orthogonal polynomial expansions
and Gauss-type quadrature, SIAM J. Numer. Anal., 50(3):1240--1263,
2012.

\bibitem{xiang2018jacobi}
S.-H. Xiang and G.-D. Liu, Optimal decay rates on the asymptotics of
orthogonal polynomial expansions for functions of limited
regularities, Numer. Math., 145:117-148, 2020.


\bibitem{zhao2013sharp}
X.-D. Zhao, L.-L. Wang and Z.-Q. Xie, Sharp error bounds for Jacobi
expansions and Gegenbauer-Gauss quadrature of analytic functions,
SIAM J. Numer. Anal., 51(3):1443-1469, 2013.


\end{thebibliography}


\end{document}